\newtheorem{assumption}{Assumption}[section]
\newtheorem{example}{Example}[section]
\title{Zero-Sum Games involving Teams against Teams: Existence of Equilibria, and Comparison and Regularity in Information \thanks{Supported
    by the Natural Sciences and Engineering Research Council of Canada. A preliminary version of this paper was presented at the 2021 IEEE Conference on Decision and Control.}}
\author{Ian Hogeboom-Burr \and Serdar Y\"{u}ksel\thanks{The authors are with the Dept. of Mathematics and Statistics, Queen's University, Kingston K7L 3N6, ON, Canada, {\tt\small \{15ijhb,yuksel\}@queensu.ca}.}}
\begin{document}
\maketitle

\begin{abstract}
Many emerging problems involve teams of agents taking part in a game. Such problems require a stochastic analysis with regard to the correlation structures among the agents belonging to a given team. In the context of Standard Borel spaces, this paper makes the following contributions for two teams of finitely many agents taking part in a zero-sum game: (i) An existence result will be presented for saddle-point equilibria in zero-sum games involving teams against teams when common randomness is assumed to be available in each team with an analysis on conditions for compactness of strategic team measures to be presented. (ii) Blackwell's ordering of information structures is generalized to $n$-player teams with standard Borel spaces, where correlated garbling of information structures is introduced as a key attribute; (iii) building on this result Blackwell's ordering of information structures is established for team-against-team zero-sum game problems. (iv) Finally, continuity of the equilibrium value of team-against-team zero-sum game problems in the space of information structures under total variation is established.
\end{abstract}

\section{Introduction}

In this paper we study zero-sum games where two sets of agents play against one another. For such problems, the informational and correlation properties of agent policies play a crucial role. We study existence of equilibrium solutions, present a characterization of comparison of information structures, and finally establish regularity properties of equilibrium values in information structures.

The notions of statistical models (which we call information structures) and comparison of statistical experiments were characterized in a seminal paper by Blackwell \cite{Blackwell1951}. For finite models, Blackwell's results provide necessary and sufficient criteria for comparing information structures given his notion of a ``more informative'' information structure, where an information structure is better for a decision-maker than another if it ensures the player will never experience a worse expected cost under the former than under the latter, where the cost is a function of a hidden state variable and the player's action, and where the action is only a function (possibly noisy with an independent noise realization) of the player's measurement. Blackwell's results induce a partial order on the set of all information structures and, accordingly, many pairs of information structures are incomparable under his criteria. Le Cam generalized Blackwell's criteria through what is now known as the \textit{Le Cam deficiency}, which allows for the proximity of any two information structures to be defined \cite{LeCamSufficiency}. 

While Blackwell and Le Cam studied information structures in the context of single-player decision problems, information structures can also be studied in the context of multiplayer games. In zero-sum games, Gossner and Mertens worked on the problem of comparing information structures in \cite{gossner2001value}, and this comparison was fully characterized in the finite case by P\k{e}ski in \cite{pkeski2008comparison}; these results were extended to a standard Borel setup in \cite{InformationHY}. This directly extends Blackwell's comparison theorem to zero-sum games. Information structures in team problems (or identical interest games) were studied in a finite-space setting by Lehrer, Rosenberg, and Shmaya \cite{Lehrer}, where Blackwell's theorem was extended to two-player team problems in finite spaces, as well as in \cite[Chapter 4]{YukselBasarBook}.

\subsection{Subtleties of Information Structures in Stochastic Games}

The study of information structures for multi-player setups inevitably necessitates a tedious geometric and topological analysis. Indeed, in the context of team and game problems, non-convexity properties on correlation structures (or strategic measures) and associated non-existence of equilibrium results, (see in particular \cite{anantharam2007common} for non-existence of equilibria and also \cite{CHSH1969, saldiyukselGeoInfoStructure} for convexity properties) and compactness/non-compactness properties \cite{saldiyukselGeoInfoStructure} (see also \cite{gupta2014existence,YukselSaldiSICON17}), require a careful stochastic analysis with many counterexamples and sufficient conditions presented.

In addition to such geometric and topological challenges, in the following we emphasize a number of information structure dependent qualitative attributes of equilibrium solutions in stochastic games. Unlike teams, the dependence on information is rather intricate in stochastic game theory:

\begin{enumerate}
\item More information to a given player does not imply better performance; see e.g. \cite{Bassan2003} for a dynamic model counterexample. For completeness, we present an example with static information, in Example \ref{moreInfoHurts}, where we will see that more information can hurt players. 
\item The team theoretic or stochastic control argument of a player {\it choosing to ignore the additional information as a validation of positive value of information} does not apply in games, even in zero-sum games, due to equilibrium behaviour.
\item As shown in \cite{hogeboom2021continuity}, for general non-zero sum games, equilibrium values are not continuous in information structures even under total variation unlike the setup in team theory.
\end{enumerate}

On the other hand, team theory and zero-sum games have many crucial properties in common: They both admit well-defined values (though with possibly non-unique equilibrium policies) and they both possess strong continuity properties in information, as studied recently in \cite{hogeboom2021continuity} (see also \cite{YukselOptimizationofChannels}). Some general properties of information structures in game theory are studied in detail in \cite{mertens2015repeated,tBasarStochasticDiffGames,witsenhausen1971relations,sanjari2021gameoptimality}. Accordingly, for zero-sum games, several positive attributes regarding informational structure-dependent properties of equilibrium solutions arise:

\begin{enumerate}
\item Under mild conditions (to be reviewed later, e.g. not requiring absolute continuity conditions on information structures), every zero-sum game has a Nash equilibrium solution (also called a saddle-point solution). 
%\item Optimality is greatly facilitated by the versatile ordered interchangeability property (see Theorem \ref{OrderedInt}). This avoids the equilibrium selection problem that even team problems suffer from.
\item A complete theory of ordering of information structures, generalizing that of Blackwell's, is possible; see \cite{pkeski2008comparison} and \cite{InformationHY} for finite and standard Borel setups, respectively. An implication is that, no additional information can hurt a player, though this argument is quite subtle in the game case: As noted, the {\it choose to ignore the additional information and thus no information cannot hurt} argument of stochastic control is not applicable, even though the implication that more information cannot hurt turns out to be correct via a more delicate (geometric) argument.
\item Similar to team problems, equilibrium values for zero-sum games are continuous in information structures under total variation and share similar (though slightly refined) semi-continuity properties under weak/setwise convergences of information structures.
\end{enumerate}

\subsection{Contributions} In this paper in the context of zero-sum games where two sets of agents play against one another, we make the following contributions.

\begin{itemize}
    \item An existence result will be presented for equilibria in zero-sum games involving teams against teams when common randomness will be assumed to be available in Theorem \ref{SPExistTvsT}. As a further technical contribution, a key compactness condition is provided in Theorem \ref{LCcompact} (which will also be used to establish compactness of correlated garblings of information structures).
    \item Blackwell's ordering of information structures will be extended to $n$-player teams with standard Borel spaces in Theorem \ref{Blackwellteam}, as will Le Cam's deficiency results. In this context, correlated garbling of information structures will be introduced as a defining attribute. Our result generalizes \cite{LehrerRosenbergShmaya}, which considered finite models.
    \item Blackwell's ordering of information structures will be generalized to team-against-team zero-sum game problems in Theorem \ref{BlackwellTaT}. In particular, we obtain a partial order on the space of information structures, with an implication being that more information given to a team of decision makers cannot hurt the team, generalizing a corresponding zero-sum game result given in \cite{pkeski2008comparison} and \cite{InformationHY}.
    \item The continuity of value of team-against-team zero-sum game problems in the space of information structures under total variation will be established in Theorem \ref{teamateamtheoremCont}.
\end{itemize}

\section{Model and some Supporting Results}

\subsection{Value of Information can be Negative in Stochastic Games: An Example with Static Information Structure}

Let us give an example on the negative value of information in general stochastic games. While there exist many examples involving dynamic information structures (e.g. \cite{Bassan2003}), the following example shows that even when the information at an agent only depends on exogenous randomness, having more information can hurt an agent in a stochastic game.

\begin{example}\label{moreInfoHurts}
Let $\mathbb{X} = [0,1]$, with prior distribution $\zeta$ defined by the continuous uniform distribution. Let there be two teams of decision makers, each consisting of $N$ individual decision makers, and each with measurement spaces $\mathbb{Y}^i_1= [0, 1]$, $1 \leq i \leq N$ and $\mathbb{Y}^j_2= [0, 1]$ for $1 \leq j \leq N$. We also define the action spaces of the respective DMs as $\mathbb{U}^i_1 = \mathbb{U}^j_2 = [0,1]$ again with $1 \leq i,j \leq N$, and the teams' cost functions as, with ${\bf u}_i = \{u_i^1, \cdots, u_i^N\}$, $i=1,2$:
\begin{equation}
    c_1(x, {\bf u}_1, {\bf u}_2) = \begin{cases} (x - \frac{1}{N} \sum_{j=1}^N u^j_1)^2 + 2, & \frac{1}{N} \sum_{j=1}^N u^j_2 = x \\
    2(x - \frac{1}{N} \sum_{j=1}^N u^j_1)^2 + (\frac{1}{N} \sum_{j=1}^N u^j_1 - \frac{1}{N} \sum_{j=1}^N u^j_2)^2,  & \frac{1}{N} \sum_{j=1}^N u^j_2 \neq x \nonumber
    \end{cases}
\end{equation}

\begin{equation}
    c_2(x, {\bf u}_1, {\bf u}_2) = \begin{cases} (x - \frac{1}{N} \sum_{j=1}^N u^j_2)^2 + 2, & \frac{1}{N} \sum_{j=1}^N u^j_1 = x \\
    2(x - \frac{1}{N} \sum_{j=1}^N u^j_2)^2 + (\frac{1}{N} \sum_{j=1}^N u^j_1 - \frac{1}{N} \sum_{j=1}^N u^j_2)^2,  & \frac{1}{N} \sum_{j=1}^N u^j_1 \neq x \nonumber
    \end{cases}
\end{equation}

Under measurement channels where neither of the team has any information regarding $x$, (which could be achieved for instance by measurement channels returning $y^i_m, m=1,2; 1\leq i\leq N$ which are independent of $x$, via a channel with zero information theoretic capacity), there exists an essentially unique Nash equilibrium solution given with $\frac{1}{N} \sum_{i=1}^N u^i_1 = \frac{1}{2} = \frac{1}{N} \sum_{i=1}^N u^i_2$. This uniqueness can be seen as follows: when the DMs have no information regarding $x$, the event that $\frac{1}{N} \sum_{i=1}^N u^i_1=x$ or $\frac{1}{N} \sum_{i=1}^N u^i_2=x$ has zero measure and therefore the complementary conditions are active for defining the cost in the above. Note though that in this case the cost functions essentially turn the problem into a team problem since minimization over $\frac{1}{N} \sum_{i=1}^N u^i_1$ of 
\[2(x - \frac{1}{N} \sum_{i=1}^N u^i_1)^2 + (\frac{1}{N} \sum_{i=1}^N u^i_1 - \frac{1}{N} \sum_{i=1}^N u^i_2)^2\]
will lead to the same solution as the minimization of
\[2(x - \frac{1}{N} \sum_{i=1}^N u^i_1)^2 + (\frac{1}{N} \sum_{i=1}^N u^i_1 - \frac{1}{N} \sum_{i=1}^N u^i_2)^2 + 2(x - \frac{1}{N} \sum_{i=1}^N u^i_2)^2.\]
The same applies for $\frac{1}{N} \sum_{i=1}^N u^i_2$, and so we can view the DMs as solving essentially the same problem. Therefore, we have a standard static quadratic team problem which admits an essentially unique optimal solution (where the cost is strictly convex in the average team action, though not on the collection of actions) \cite[Theorem 2.6.3]{YukselBasarBook} with $\frac{1}{N} \sum_{i=1}^N u^i_1=\frac{1}{2}=\frac{1}{N} \sum_{i=1}^N u^i_2$. This leads to an expected cost of $\frac{1}{6}$ for each team.

However, when the DM measurement channels are such that all DMs have perfect information regarding $x$ (e.g. through $y^i_1 = x  = y^j_2, \qquad 1 \leq i,j, \leq N$), we first see that another equilibrium solution arises: with $\frac{1}{N} \sum_{i=1}^N u^i_1 = x = \frac{1}{N} \sum_{i=1}^N u^i_2$. One can observe this is a Nash equilibrium by noting that if DM $2$ holds their strategy constant as $\frac{1}{N} \sum_{i=1}^N u^i_2 = x$, then DM 1 will play the game with cost function $(x - \frac{1}{N} \sum_{i=1}^N u^i_1)^2 + 2$, which is minimized by playing $\frac{1}{N} \sum_{i=1}^N u^i_1 =x$. The same holds true for DM 2 in the reverse case where DM 1's strategy is fixed. We now make the point that this is the essentially unique deterministic Nash equilibrium:

%The equilibrium under no information of $\frac{1}{N} \sum_{i=1}^N u^i_1 = \frac{1}{2} = \frac{1}{N} \sum_{i=1}^N u^i_2$ is no longer a Nash equilibrium under perfect information. We can observe this by noting that if DM 2 holds their strategy constant as $\frac{1}{N} \sum_{i=1}^N u^i_2 = \frac{1}{2}$, then DM 1 will almost surely be tasked with minimizing $2(x - \frac{1}{N} \sum_{i=1}^N u^i_1)^2 + (\frac{1}{N} \sum_{i=1}^N u^i_1 - \frac{1}{2})^2$. However, now that DM 1 knows $x$, and because the first term is weighted more heavily, the expected value of the cost function is minimized the first team playing $\frac{1}{N} \sum_{i=1}^N u^i_1 = \frac{2}{3}x+\frac{1}{6}$. 
%
%In fact, i
If the condition $\frac{1}{N} \sum_{i=1}^N u^i_1=x=\frac{1}{N} \sum_{i=1}^N u^i_2$ is not active, we would again reduce to a team problem and in this case, we would have \[\gamma^i_1(x) = \frac{2}{3}x+\frac{1}{3} \frac{1}{N} \sum_{i=1}^N \gamma^i_2(x), \qquad \gamma^i_2(x) = \frac{2}{3}x+\frac{1}{3} \frac{1}{N} \sum_{i=1}^N \gamma^i_1(x)\]
leading to, by (essential) uniqueness \cite[Theorem 2.6.3]{YukselBasarBook}, $ \frac{1}{N} \sum_{i=1}^N \gamma^i_1(x)=x= \frac{1}{N} \sum_{i=1}^N \gamma^i_2(x)$, which however would take us back to the inefficient equilibrium given above. Thus, the (essentially) unique pure strategy Nash equilibrium for the full-information problem is at $ \frac{1}{N} \sum_{i=1}^N u^i_1 = x = \frac{1}{N} \sum_{i=1}^N u^i_2$. This results in an expected cost of $2$ for each of the 2 teams of DMs.

This example demonstrates a static team-against-team game in which more information hurts all the DMs.
We note that by adding individual costs such as $(u^i_1)^2$ so that the costs are strictly convex, the essential uniqueness can be strengthened to uniqueness (however the goal is just to show that more information can be detrimental in game problems). For example, if the costs are modified as follows, then uniqueness under the former information structure can be attained and a similar analysis is applicable for the latter if a strict convexity is imposed on the costs in the first cases of both functions:
\begin{equation}
    c^1(x, {\bf u}^1, {\bf u}^2) = \begin{cases} (x - \frac{1}{N} \sum_{i=1}^N u^i_1)^2 + 2, & \frac{1}{N} \sum_{i=1}^N u^i_2 = x \\
    2(\frac{3}{2} x - \frac{1}{N} \sum_{i=1}^N u^i_1)^2 + \frac{1}{N} \sum_{i=1}^N (u^i_1)^2 + (\frac{1}{N} \sum_{i=1}^N u^i_1 - \frac{1}{N} \sum_{i=1}^N u^i_2)^2,  & \frac{1}{N} \sum_{i=1}^N u^i_2 \neq x \nonumber
    \end{cases}
\end{equation}
\begin{equation}
    c^2(x, {\bf u}^1, {\bf u}^2) = \begin{cases} (x - \frac{1}{N} \sum_{i=1}^N u^i_2)^2 + 2, & \frac{1}{N} \sum_{i=1}^N u^i_1 = x \\
    2(\frac{3}{2} x - \frac{1}{N} \sum_{i=1}^N u^i_2)^2 + \frac{1}{N} \sum_{i=1}^N (u^i_2)^2 + (\frac{1}{N} \sum_{i=1}^N u^i_1 - \frac{1}{N} \sum_{i=1}^N u^i_2)^2,  & \frac{1}{N} \sum_{i=1}^N u^i_1 \neq x \nonumber
    \end{cases}
\end{equation}

Note that in this case, $u^j_1 = \frac{1}{2}$ is the unique solution in the no information case and $u^i_m=x, m=1,2; 1 \leq i \leq N$ is the essentially unique solution in the full information case.

\end{example}

We will see, however, that in the context of zero-sum games involving teams of DMs, a partial ordering is possible and more information cannot hurt a DM (or team).

\subsection{Stochastic Teams and a Supporting Result}
Consider $n \in \mathbb{Z}_{\geq 1}$ players on a team in a single-stage game. Let $\Omega_0$ denote a standard Borel state space, with $x \sim \zeta$ an $\Omega_0$-valued random variable known as the state of nature. Recall that a standard Borel space is a Borel subset of a complete, separable, metric (Polish) space. Let $\zeta$ be the {\it prior} probability distribution on a hidden state, which is common knowledge to all players. We denote by $\mathbb{Y}^i$ Player $i$'s standard Borel measurement space, for $i \in \{1, \dots, n\}$. We use $-i$ to denote all players other than Player $i$. For each player, we define their measurement $y^i$, which is a $\mathbb{Y}^i$-valued random variable, where: $y^i = g^i(x, v^i)$ for some noise variable $v^i$ (and which, without any loss, can be taken to be $[0,1]$-valued). By stochastic realization arguments \cite{BorkarRealization}, the above formulation is equivalent to viewing $y^i$ as being generated by a measurement channel $Q^i$, which is a Markov transition kernel from $\Omega_0$ to $\mathbb{Y}^i$.
We denote the joint probability measure on $\Omega_0 \times \mathbb{Y}^1 \times \dots \times \mathbb{Y}^n$ by $\mu(d\omega_0, dy^1, \dots, dy^n)$, and define this as the \textit{information structure}.

%Let $\mathcal{P}(\Omega_0)$ denote the set of all probability measures on (the Borel sigma field over) $\Omega_0$. 
For $P\in  \mathcal{P}(\Omega_0)$ and kernel $Q$, we let $P Q$ denote the joint distribution induced on
$(\Omega_0\times \mathbb{Y}, \mathcal{B}(\Omega_0\times
\mathbb{Y}))$ by channel $Q$ with input distribution $P$:
\[  P Q(A) = \int_{A} Q(dy|x)P (d\omega_0), \quad A\in  \mathcal{B}(\Omega_0 \times \mathbb{Y}). \]

Each player also has a standard Borel action space $\mathbb{U}^i$. The team has a common cost function $c: \Omega_0 \times \mathbb{U}^1 \times \dots \times \mathbb{U}^n \rightarrow \mathbb{R}$. 

For fixed prior, state space, and measurement spaces $\zeta$, $\Omega_0$, $\mathbb{Y}^1, \dots, \mathbb{Y}^n$, a \textit{game} for the team is a $n+1$-tuple consisting of a measurable and bounded cost function and an action space for each player, $G = (c, \mathbb{U}^1, \dots, \mathbb{U}^n)$. 

The team's goal is to minimize the expected cost functional for a given game $G$:
\begin{align*}
J(G, \mu, \gamma^1, \dots, \gamma^n) := E^{\mu, \bar{\gamma}}[c(x,\gamma^1(y^1), \dots, \gamma^n(y^n))]
\end{align*}
where the players select their policies from the set of all admissible \textit{policies} $\Gamma^i := \{\gamma: \mathbb{Y}^i \rightarrow \mathbb{U}^i\}$, which are measurable functions from a player's measurement space to their action space. We refer to $u^i = \gamma^i(y^i)$ as the action of the player, and $\gamma^i$ as their policy.  

In team problems, equilibrium solutions are team policies $\bar{\gamma}^* := (\gamma^{1,*}, \dots, \gamma^{n,*})$ which minimize the value function. The value function at equilibrium is denoted by $J^*(c, \mu)$. General existence results for team-optimal policies can be found in \cite[Section 5]{YukselWitsenStandardArXiv}.

To ensure existence of team-optimal policies, throughout this paper we will assume the player action spaces $\mathbb{U}^i$ are compact and the cost function is continuous and bounded in actions, but not necessarily in $\omega_0$. 

We also recall the notion of a strategic measure, which is a probability measure on the state, measurement, and action spaces induced by a policy. Given a fixed team policy $\bar{\gamma} = (\gamma^1, \dots, \gamma^n)$, a \textit{strategic measure} is the joint probability measure induced by $\bar{\gamma}$ on $\Omega_0 \times \prod^n_{k = 1} (\mathbb{Y}^k \times \mathbb{U}^k)$. We now define and revisit various spaces of strategic measures. 

We define $L_C(\mu)$, as in \cite{YukselSaldiSICON17}:
\[L_C(\mu) = \left\{P \in \mathcal{P}\left(\Omega_0 \times \prod_{k=1}^{n}(\mathbb{Y}^k \times \mathbb{U}^k)\right) : P(B) = \int \eta(dz) L_A(\mu, \tilde{\gamma}(z))(B), \eta \in \mathcal{P}(\mathcal{W})\right\}.\]

Where $\mathcal{W} = [0,1]^n$, $\tilde{\gamma}$ represents a collection of team policies measurably parameterized by $z \in \mathcal{W}$ so that the map $L_A(\mu, \tilde{\gamma}(\cdot)):\mathcal{W} \rightarrow L_A(\mu)$ is Borel measurable, and where $L_A(\mu)$ is:
\begin{align*}L_A(\mu) = &\bigg\{P \in \mathcal{P}\left(\Omega_0 \times \prod_{k=1}^{n}(\mathbb{Y}^k \times \mathbb{U}^k)\right) \\ &: P(B) = \int_{B^0 \times \prod_k A^k}\mu(d\omega_0, d\textbf{y})\prod_{k}1_{u_k = \gamma^k(y^k) \in B^k}, \gamma^k \in \Gamma^k, B \in \mathcal{B}\left(\Omega_0 \times \prod_{k=1}^{n}(\mathbb{Y}^k \times \mathbb{U}^k)\right)\bigg\}.
\end{align*}

Note that $L_A(\mu)$ is Borel measurable under the weak convergence topology \cite{saldiyukselGeoInfoStructure}, and we use $L_A(\mu, \bar{\gamma})$ to denote the strategic measure induced by a particular team strategy $\bar{\gamma}$. We define $L_{CCR}(\mu)$ as:

\begin{eqnarray} 
L_{CCR}(\mu) &:=& \bigg\{P \in \mathcal{P}\left(\Omega_0 \times \prod_{k=1}^{n}(\mathbb{Y}^k \times \mathbb{U}^k)\right)  \nonumber \\ 
&& \qquad :  P(B) = \int_{B \times \mathcal{W}}\eta(dz)\mu(d\omega_0, d\bar{y}) \prod_k \Pi^k(du^k|y^k, z), \eta \in \mathcal{P}(\mathcal{W})\bigg\}, \nonumber
\end{eqnarray}
where $\Pi^k$ is a stochastic kernel from $(\mathbb{Y}^k \times \mathcal{W})$ to $\mathbb{U}^k$, for $k = 1, \dots, n$. It was shown in \cite[Theorem 3.1]{saldiyukselGeoInfoStructure} that $L_C(\mu) = L_{CCR}(\mu)$. 

We will also define $L_R(\mu)$ as:
\[L_R(\mu) = \left\{P \in \mathcal{P}\left(\Omega_0 \times \prod_{k=1}^{n}(\mathbb{Y}^k \times \mathbb{U}^k)\right) : P(B) = \int_B \mu(d\omega_0, d\bar{y}) \prod_{k=1}^n \kappa^k(du^k|y^k)\right\},\]
where $\kappa^k$ is taken from the set of stochastic kernels from $\mathbb{Y}^k$ to $\mathbb{U}^k$ for each $k  =1,\dots, n$. 

Essentially, $L_A(\mu)$ is the set of all strategic measures induced by admissible measurable policies, $L_R(\mu)$ is the set induced by allowing independently randomized admissible policies for each player, and $L_C(\mu)$ is the set induced by allowing randomized admissible policies with potentially common randomness among players. 

It was shown in \cite{YukselSaldiSICON17} that $L_C(\mu)$ is convex, which will be crucial in the extension of Blackwell's theorem. Closedness and compactness of $L_C(\mu)$ will be required for some of the results presented here. In general, $L_C(\mu)$ is not closed under the weak convergence topology \cite[Theorem 4.3]{saldiyukselGeoInfoStructure}. However, we can relate sufficient conditions for the compactness of the space $L_R(\mu)$ to $L_C(\mu)$. %, via \cite[Theorem 5.2]%{YukselWitsenStandardArXiv}: 

For the following result, we assume that the measurements of the players are either independent or by a change of measure argument, under an absolute continuity condition, have been turned into a static information structure with independent measurements setup, see \cite[Section 2.2]{YukselWitsenStandardArXiv} for details. In particular, we assume that 
\begin{assumption}\label{AbsoluteContIndRedG}
\begin{eqnarray}\label{absContCont1}
P(d\omega_0,dy^1,\cdots,dy^n) \ll \mu_0(d\omega_0) \prod_{i=1}^n \bar{Q}^i(dy^i)
\end{eqnarray}
for some reference measures $\mu_0$ and $\bar{Q}^i$ so that the Radon-Nikodym derivative
\[\frac{d (P(d\omega_0,dy^1,\cdots,dy^n))}{d (\mu_0(d\omega_0) \prod_{i=1}^n \bar{Q}^i(dy^i)))} =: f(\omega_0,y^1,\cdots,y^n)\]
exists. 
\end{assumption}
This condition is important in establishing that $L_R(\mu)$ is compact under the weak convergence topology, as we make formal in the following.

\begin{theorem} \label{existenceRelaxed3} \cite[Theorem 5.2]{YukselWitsenStandardArXiv}
Consider a static or a dynamic team that satisfies Assumption \ref{AbsoluteContIndRedG} (and thus, admits an independent static reduction). The set $L_R(\mu)$ is closed under the weak convergence topology. Furthermore, if the action sets are compact (or under a tightness condition), the set $L_R(\mu)$ is compact under the weak convergence topology.
%(ii) Let $c$ be lower semi-continuous in ${\bf u}$ for any $\omega_0, {\bf y}$. Suppose further that $\mathbb{U}^i$ is $\sigma-$compact (that is, $\mathbb{U}^i=\cup_n K_n$ for a countable collection of increasing compact sets $K_n$) and, without any loss, the control laws can be restricted to those with $E[\phi^i(u^i)]\leq M$ for some lower semi-continuous $\phi^i: \mathbb{U}^i \to \mathbb{R}_+$ which satisfies $\lim_{n \to \infty} \inf_{u^i \notin K_n} \phi^i(u^i) = \infty$.
%Then, an optimal team policy exists. 
\end{theorem}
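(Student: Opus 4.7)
My plan is to exploit the independent static reduction guaranteed by Assumption~\ref{AbsoluteContIndRedG} to rewrite every $P \in L_R(\mu)$ in a form whose dependence on the kernels is isolated in a product of marginals, and then appeal to the standard fact that weak convergence of product measures on a Polish product space is equivalent to weak convergence of each factor. Using the Radon--Nikodym density provided by Assumption~\ref{AbsoluteContIndRedG}, any $P \in L_R(\mu)$ can be written as
\begin{equation*}
P(d\omega_0, d\bar{y}, d\bar{u}) = f(\omega_0, \bar{y})\, \mu_0(d\omega_0) \prod_{i=1}^{n} \bigl[\bar{Q}^i(dy^i)\, \kappa^i(du^i \mid y^i)\bigr],
\end{equation*}
so the only ``moving part'' across elements of $L_R(\mu)$ is the tuple of joint laws $\nu^i(dy^i, du^i) := \bar{Q}^i(dy^i)\kappa^i(du^i \mid y^i) \in \mathcal{P}(\mathbb{Y}^i \times \mathbb{U}^i)$, each with a fixed first marginal $\bar{Q}^i$.

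For closedness, I would take a weakly convergent sequence $P_n \to P^*$ with $P_n \in L_R(\mu)$. The $(\omega_0, \bar{y})$-marginal of every $P_n$ equals $\mu$, and hence so does that of $P^*$. Under the reduction the $(\bar{y}, \bar{u})$-marginal of each $P_n$ factors as a product $\prod_i \nu^i_n$ across $i$, with each $\nu^i_n$ having fixed first marginal $\bar{Q}^i$. Since weak convergence of a sequence of product measures on a Polish product space is equivalent to weak convergence of each factor marginal, one extracts weak limits $\nu^i_* \in \mathcal{P}(\mathbb{Y}^i \times \mathbb{U}^i)$ whose first marginal remains $\bar{Q}^i$. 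Disintegration in the standard Borel space $\mathbb{Y}^i \times \mathbb{U}^i$ then produces a measurable kernel $\kappa^i_*$ with $\nu^i_* = \bar{Q}^i \kappa^i_*$, and reassembling via $\mu_0$ and the density $f$ exhibits $P^* \in L_R(\mu)$.

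For compactness, the plan is to verify tightness of $L_R(\mu)$ coordinate by coordinate in the reduced representation: the $\omega_0$-marginal is the single tight measure $\mu_0$; each $y^i$-marginal equals the single tight measure $\bar{Q}^i$; and each $u^i$-marginal lives in the compact space $\mathbb{U}^i$ (or is tight under the stated tightness hypothesis). Coordinate-wise tightness of the $\nu^i_n$ yields tightness of their product, and adjoining the fixed $\mu_0$ factor gives tightness of the family $\{P_n\}$. Prokhorov's theorem then delivers relative compactness, and together with the closedness argument above this gives the claimed compactness.

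The main obstacle I anticipate is ensuring that the product structure of the $(\bar{y}, \bar{u})$-marginal is faithfully preserved in the weak limit, with no spurious cross-correlation between different players' coordinates emerging. This is precisely what Assumption~\ref{AbsoluteContIndRedG} secures at the finite-$n$ level, by making the $(\bar{y}, \bar{u})$-marginal of each $P_n$ a true tensor product across $i$ so that the product-convergence lemma applies. A secondary care-point is the measurable selection of each limiting kernel $\kappa^i_*$ from $\nu^i_*$; this is handled by standard disintegration in standard Borel spaces and does not present any serious difficulty.
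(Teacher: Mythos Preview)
The paper does not supply its own proof of this statement; it is quoted as \cite[Theorem 5.2]{YukselWitsenStandardArXiv} and used as a black box, so there is nothing in-paper to compare against and I assess your proposal on its own.

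Your strategy is right in spirit, but the closedness argument contains a real gap. You assert that ``the $(\bar y,\bar u)$-marginal of each $P_n$ factors as a product $\prod_i \nu^i_n$ \dots\ with each $\nu^i_n$ having fixed first marginal $\bar Q^i$.'' For $P_n\in L_R(\mu)$ this is false: integrating out $\omega_0$ from $P_n = f(\omega_0,\bar y)\,\mu_0(d\omega_0)\prod_i\nu^i_n(dy^i,du^i)$ leaves $g(\bar y)\prod_i\nu^i_n$ with $g(\bar y)=\int f(\omega_0,\bar y)\,\mu_0(d\omega_0)$, and $g$ need not factor across players; likewise the $y^i$-marginal of $P_n$ is the $y^i$-marginal of $\mu$, not $\bar Q^i$. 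Assumption~\ref{AbsoluteContIndRedG} makes the \emph{reference} measure a product, not the $(\bar y,\bar u)$-marginal of $P_n$. If instead you intend to work with the reduced measure $\tilde P_n=\mu_0\otimes\prod_i\nu^i_n$ (whose $(\bar y,\bar u)$-marginal \emph{is} $\prod_i\nu^i_n$), then you need weak convergence of $P_n$ to yield weak convergence of $\tilde P_n$, and that transfer is exactly what is not automatic when $f$ is unbounded or not bounded away from zero. The obstacle you flag in your last paragraph is therefore genuine, and your proposed resolution of it is the mistaken step. The ``reassembling via $\mu_0$ and the density $f$'' line has the same hidden issue: even after you produce candidate $\nu^i_*$, showing that $f\cdot(\mu_0\otimes\prod_i\nu^i_*)$ coincides with the given weak limit $P^*$ requires a continuity argument for multiplication by the merely measurable, possibly unbounded $f$.

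A clean repair reverses the direction. Set $K^i=\{\nu\in\mathcal P(\mathbb Y^i\times\mathbb U^i):\text{first marginal}=\bar Q^i\}$ (weakly compact when $\mathbb U^i$ is compact), and define $\Phi(\nu^1,\dots,\nu^n)=f\cdot(\mu_0\otimes\prod_i\nu^i)$, whose image is exactly $L_R(\mu)$. If $\nu^i_m\to\nu^i$ weakly for each $i$ then $\mu_0\otimes\prod_i\nu^i_m\to\mu_0\otimes\prod_i\nu^i$ weakly with \emph{fixed} $(\omega_0,\bar y)$-marginal $\mu_0\otimes\prod_i\bar Q^i$, so weak convergence upgrades to $w$-$s$ convergence (cf.\ \cite{schal1975dynamic}); truncating $f$ by bounded $f_k\uparrow f$ and using that $\int(f-f_k)$ is computed against this fixed marginal gives $\int h\,f\,d(\mu_0\otimes\prod_i\nu^i_m)\to\int h\,f\,d(\mu_0\otimes\prod_i\nu^i)$ for every bounded continuous $h$, uniformly handling the tails. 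Thus $\Phi$ is continuous, and $L_R(\mu)=\Phi\bigl(\prod_i K^i\bigr)$ is compact, hence closed.
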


%\begin{theorem}
%Assume the action sets $\mathbb{U}^i$ are $\sigma$-compact (that is, $\mathbb{U}^i = \cup_n K_n$ for a countable collection of increasing compact sets $K_n$), and without any loss the control laws can be restricted to those with $E[\phi^i(u^i)] \leq M$ for some lower semicontinuous $\phi^i: \mathbb{U}^i \rightarrow \mathbb{R}_+$ which satisfies $\lim_{n \rightarrow \infty} \inf_{u^i \notin K_i}\phi^i(u^i) = \infty$. Then, the set $L_R(\mu)$ is compact under the weak convergence topology.
%\end{theorem}

We will now state the following theorem. This result will be critical both for existence of equilibria and for comparison of information structures. 
\begin{theorem}\label{LCcompact}
If $L_R(\mu)$ is compact, then $L_C(\mu)$ is compact. 
\end{theorem}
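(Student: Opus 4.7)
Plan. The strategy is to realize $L_C(\mu)$ as the image of a continuous barycenter map applied to the compact space $\mathcal{P}(L_R(\mu))$. Define
\[\Phi: \mathcal{P}(L_R(\mu)) \to \mathcal{P}\left(\Omega_0 \times \prod_{k=1}^n (\mathbb{Y}^k \times \mathbb{U}^k)\right), \qquad \Phi(\nu)(B) := \int_{L_R(\mu)} Q(B) \, \nu(dQ).\]
Since $L_R(\mu)$ is compact by hypothesis, Prokhorov's theorem gives that $\mathcal{P}(L_R(\mu))$ is compact under the weak convergence topology. Continuity of $\Phi$ is immediate: for each bounded continuous test function $f$, the map $Q \mapsto \int f \, dQ$ is continuous on $L_R(\mu)$ by the very definition of weak convergence, and hence $\Phi(\nu_n) \to \Phi(\nu)$ weakly whenever $\nu_n \to \nu$ weakly.

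The theorem then reduces to the identification $L_C(\mu) = \Phi(\mathcal{P}(L_R(\mu)))$. For the inclusion $\subseteq$, given $P \in L_C(\mu) = L_{CCR}(\mu)$ with parameters $(\eta, \Pi^1, \dots, \Pi^n)$, the assignment $\psi(z)(B) := \int_B \mu(d\omega_0, d\bar{y}) \prod_k \Pi^k(du^k|y^k, z)$ defines a Borel measurable map $\psi: \mathcal{W} \to L_R(\mu)$ (measurability verified on bounded continuous test functions via joint measurability of the $\Pi^k$), so that $P = \Phi(\psi_* \eta)$ belongs to the image.

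The reverse inclusion $\supseteq$ is the principal technical hurdle. Given $\nu \in \mathcal{P}(L_R(\mu))$, one exploits that the compact metrizable space $L_R(\mu)$ is standard Borel: a Borel isomorphism onto a subset of $[0,1] \hookrightarrow \mathcal{W}$ transports $\nu$ to some $\eta \in \mathcal{P}(\mathcal{W})$ and supplies a Borel measurable selection $z \mapsto Q_z \in L_R(\mu)$. Each $Q_z$ already factors as $\mu \otimes \prod_k \kappa^k_z$ because $Q_z \in L_R(\mu)$, but to promote this sectionwise factorization to jointly measurable kernels $\Pi^k(du^k|y^k, z)$ I would disintegrate the joint probability measure $\eta(dz)\, Q_z(d\omega_0, d\bar{y}, d\bar{u})$ on $\mathcal{W} \times \Omega_0 \times \prod_k(\mathbb{Y}^k \times \mathbb{U}^k)$ with respect to its $(z, \omega_0, \bar{y})$-marginal; the conditional law of $\bar{u}$ given $(z, \omega_0, \bar{y})$ is $\eta \otimes \mu$-almost surely a product, so taking $\Pi^k(du^k|y^k, z)$ to be the $\mathbb{U}^k$-marginal of the disintegration kernel recovers the required product factorization, and verifies $\Phi(\nu) \in L_C(\mu)$. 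With the identification in hand, $L_C(\mu) = \Phi(\mathcal{P}(L_R(\mu)))$ is compact as a continuous image of a compact set.
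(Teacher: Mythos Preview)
Your argument is correct and follows essentially the same route as the paper: both proofs identify $L_C(\mu)$ with the barycenter image of $\mathcal{P}(L_R(\mu))$, invoke compactness of $\mathcal{P}(L_R(\mu))$ via Prokhorov, and use that integration against a bounded continuous test function is weakly continuous. The only cosmetic difference is that the paper runs the argument sequentially (extract a convergent subsequence of mixing measures $\nu_k$ and test against a countable weak-convergence-determining family), whereas you package the same facts as ``continuous image of a compact set''; the paper also simply cites the identification $L_C(\mu)=\{\text{mixtures of }L_R(\mu)\}$ from \cite{saldiyukselGeoInfoStructure} rather than reproving the $\supseteq$ direction as you do via Borel isomorphism and disintegration.
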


\begin{proof}
Let $\{\phi_k\}$ be an arbitrary sequence of information structures in $L_C(\mu)$ defined with $B \in \mathcal{B}\left(\Omega_0 \times \prod_{k=1}^{n}(\mathbb{Y}^k \times \mathbb{U}^k)\right)$ as

\[\phi_k(B) = \int_{B} \left( \int \nu_k(d\theta) \theta(dz)\right).\]
Following the relationship between $L_{CCR}(\mu)$ and $L_R(\mu)$, as well as \cite[Theorem 3.1]{saldiyukselGeoInfoStructure}, we can view $L_C(\mu)$ as the mixture of elements of $L_R$. The convergence properties of the sequence $\{\phi_k\}$ can be studied by the convergence of the family of measures:
\[\phi_k(B) = \int_{B} f_m(z) \left( \int \nu_k(d\theta) \theta(dz)\right),\]
for a fixed countable collection of {\it weak-convergence determining} functions $f_m:\Omega \times (\prod_{i=1}^n \mathbb{X}^i \times \mathbb{Y}^i) \rightarrow \mathbb{R}$ \cite[Theorem 3.4.5]{ethier2009markov}. 

Now, using Fubini's Theorem, we can rewrite the above as:
\[\int \nu_k(d\theta) \int f_m(z) \theta(dz),\]
and can note that the inner integral is continuous and bounded in $\theta$.

Thus we can write this as: \[\int \nu_k(d\theta) F_m(\theta),\] for the continuous and bounded $F_m$ on $L_R(\mu)$ given with $F_m(\theta) := \int f_m(z) \theta(dz)$. 

Since $L_R(\mu)$ is weakly compact, the set of probability measures on ${\cal B}(L_R(\mu))$ is weakly compact \cite{Bil99}; and accordingly there exists a subsequence of $\{\nu_k\}$, $\{\nu_{k_l}\}$, which converges weakly to some $\nu$ in ${\cal P}(L_R(\mu))$. Since $F_m$ is continuous and bounded on $L_R(\mu)$, it follows then that $\int \nu_{k_l}(d\theta) F_m(\theta)$ will converge to $\int \nu(d\theta) F_m(\theta)$. This applies for each $m$, and therefore, we conclude that $\{\phi_k\}$ has a convergent subsequence in $L_C(\mu)$. Thus, $L_C(\mu)$ is compact. 
\end{proof}

%that for each element $\phi \in L_C(\mu)$, there exists $\nu \in \mathcal{W}$ such that:
%\[\phi(A) = \int \nu(dz)L_R(\mu, \tilde{\gamma}(z))(A), \quad \forall A \in \mathcal{B}\left(\Omega_0 \times \prod_{k=1}^{n}(\mathbb{Y}^k \times \mathbb{U}^k)\right).\]

%So we have that each element of $L_C(\mu)$ can be constructed as a mixture of elements of $L_R(\mu)$ through such a mapping. Furthermore, any mixture of elements of $L_R(\mu)$ will be an element of $L_C(\mu)$ (this can be seen by clearly definition, or since $L_R(\mu) \subset L_C(\mu)$ and $L_C(\mu)$ is convex). 

%Following this, by identifying these mixtures with probability measures on $L_R(\mu)$, we can construct a mapping from $\mathcal{P}(L_R(\mu))$, the set of all probability measures on $L_R(\mu)$, onto $L_C(\mu)$. 

%Note then that, since $L_R(\mu)$ is a compact metric space under the weak convergence topology, $\mathcal{P}(L_R(\mu))$ will be a compact metric space, and thus a Polish space. Since $L_R(\mu, \tilde{\gamma}(\cdot))$ will be measurable, and $L_C(\mu)$ is a subset of a Polish space, our mapping will be a Borel measurable map from a Polish space to the nonempty subset of a Polish space, and so it follows from \cite[Theorem 13.2.1]{Dud02} that there exists a continuous map from $\mathcal{P}(L_R(\mu))$ to $L_C(\mu)$. Thus, when $L_R(\mu)$ is compact, $L_C(\mu)$ is the continuous image of a compact set, and so it will also be compact. 

This gives us sufficient conditions to ensure compactness of $L_C(\mu)$. This result will be crucial in our analysis both for the existence of equilibria as well as on a key closedness property involving a set of correlated garblings in a team of agents.

\subsection{Team-against-Team Zero-Sum Games}\label{InfoStTTG1}
For teams against teams, we consider two teams. Team 1 (the minimizing team) consists of $N_1$ DMs and Team 2 (the maximizing team) consists of $N_2$ DMs. The two teams compete in a single-stage zero-sum game with private information. Here we use a subscript $i$ to denote the team to which a DM belongs. 

The teams share a common cost function $c: \Omega_0 \times \mathbb{U}^1_1 \times \dots \times \mathbb{U}^{N_1}_1 \times \mathbb{U}^1_2 \times \dots \times \mathbb{U}^{N_2}_2 \rightarrow \mathbb{R}$. Team 1's goal is to minimize the expected cost and Team 2's goal is to maximize it. 

The information structure for the game $\mu$ is a joint probability measure on $\Omega_0 \times \mathbb{Y}^1_1 \times \dots \times \mathbb{Y}^{N_1}_1 \times \mathbb{Y}^1_2 \times \dots \times \mathbb{Y}^{N_2}_2$. We use $\mu_i$ to denote the marginal of $\mu$ on $\Omega_0 \times \mathbb{Y}^i_i \times \dots \times  \mathbb{Y}^{N_i}_i$ for $i = 1,2$ (i.e. $\mu_i$ captures the information available to team $i$). 

For teams-against-teams we use the notation $V$ and $V^*$, rather than $J$ and $J^*$, to denote the value and equilibrium value of a game, respectively. 

\begin{definition} Given an information structure $\mu$, let ${\bf \Gamma}_i$ denote the set of team policies of Team $i$, $i=1,2$. We say that $\bar{\gamma}^{*}_1, \bar{\gamma}^{*}_2$ is an equilibrium for a team-against-team game if
\begin{equation}\inf_{\bar{\gamma}_1 \in {\bf \Gamma}_1} V(\mu,\bar{\gamma}_{1},\bar{\gamma}^{*}_2) = V(\mu,\bar{\gamma}^{*}_1,\gamma^{*}_2) = \sup_{\bar{\gamma}_2 \in {\bf \Gamma}_2}  V(\mu,\gamma^{*}_1,\bar{\gamma}_2). \nonumber \end{equation}
\end{definition}

\section{Team-against-Team Zero-Sum Games: Existence of Saddle Point Equilibria}
%\ihb{We will now extend Blackwell's comparison of information structures to team-against-team zero-sum games. 

First, we establish an existence result for saddle-point equilibria. As emphasized explicitly in \cite{anantharam2007common}, lack of convexity in the set of team policies can lead to games without a saddle-point. We will allow each team to select a team policy in which players may share common randomness. This randomness is independent between teams. Allowing this, rather than only independently randomized policies for each team, ensures that the team policy spaces are convex, which is crucial in demonstrating the existence of an equilibrium.

\cite[Theorem 3.1]{balder1988generalized} (and \cite[Theorem 3.1]{InformationHY}) study existence of saddle-point equilibria in two-player zero-sum games under an absolute continuity condition and these have been relaxed in \cite{mamer1986zero} (also \cite[Theorem 3.4]{balder1988generalized}, \cite[Theorem 3.2]{InformationHY}). In the following, we present a generalization of the relaxed version where absolute continuity among the different team-players is not required (thus Assumption \ref{ABSContA1} presented below does not necessarily apply, but does apply within the team under Assumption \ref{ABSContA3} below). 

For $i = 1,2$, for fixed information structure $\mu$, we define $\mu_i$ as the the marginal of $\mu$ on $\Omega_0 \times \prod_{j=1}^{N_i} \mathbb{Y}^j_i$. $\mu_i$ represents the portion of the information structure $\mu$ that is relevant for Team $i$'s action selection (as they can not see the opposing team's measurements). We then we define $L_C(\mu_i)$ following our usual definition for a single team, allowing for actions to be selected using independent (but possibly shared between players) randomness. 
 
 We will assume that $L_C(\mu_i)$ is closed for every fixed policy of the opponent team, a sufficient condition for this is given below in Assumption \ref{ABSContA3}.
 
\begin{assumption}\label{ABSContA3}
Equation (\ref{absContCont1}) holds for both $\mu_1$ and $\mu_2$.
\end{assumption}

%Note here that no assumptions were made regarding independence of the teams' measurement channels, as no restrictions were made on the structure of $\mu$. However, it is assumed that any randomness used in action selection is independent between teams.   

\begin{theorem}\label{SPExistTvsT}
For a given team-against-team game, assume the following hold.
\begin{enumerate}[nosep]
    \item[(i)] Team policies are selected such that the strategic measures belong to $L_C(\mu_1), L_C(\mu_2)$, respectively. 
    \item[(ii)] Assumption \ref{ABSContA3} applies (the implication, to be shown in the proof, being that $L_C(\mu_1)$ (when Team 2's policy is fixed) and $L_C(\mu_2)$ (when Team 1's policy is fixed) are compact under weak convergence). 
    \item[(iii)] The cost function $c$ is bounded and continuous in players' actions for any $\omega_0 \in \Omega_0$.
\end{enumerate}    
Then an equilibrium exists. 
\end{theorem}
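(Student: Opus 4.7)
The plan is to establish existence of a saddle-point equilibrium via a minimax argument on the strategic measure sets $L_C(\mu_1)$ and $L_C(\mu_2)$. I would parametrize each team's randomized policy by its strategic measure $P_i\in L_C(\mu_i)$ (a probability measure on $\Omega_0\times\prod_{j=1}^{N_i}(\mathbb{Y}^j_i\times\mathbb{U}^j_i)$) and express the expected cost as a bilinear functional on $L_C(\mu_1)\times L_C(\mu_2)$.

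First, I would verify that each $L_C(\mu_i)$ is nonempty, convex, and weakly compact. Convexity is the result of \cite{saldiyukselGeoInfoStructure} already cited above. For compactness, Assumption \ref{ABSContA3} guarantees that each team's information structure admits an independent static reduction, so Theorem \ref{existenceRelaxed3} gives weak compactness of $L_R(\mu_i)$; Theorem \ref{LCcompact} then lifts this to weak compactness of $L_C(\mu_i)$. Since the common randomness used within Team $1$ is independent of that used within Team $2$, any pair $(P_1,P_2)$ induces a joint strategic measure on $\Omega_0\times\prod_{i,j}(\mathbb{Y}^j_i\times\mathbb{U}^j_i)$ that factors as $\mu(d\omega_0,d\bar{y}_1,d\bar{y}_2)K_1(d\bar{u}_1|\bar{y}_1)K_2(d\bar{u}_2|\bar{y}_2)$, where $K_i$ denotes the marginalized action kernel of Team $i$. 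I would then write
\[ V(P_1,P_2)=\int c(\omega_0,\bar{u}_1,\bar{u}_2)\,\mu(d\omega_0,d\bar{y}_1,d\bar{y}_2)\,K_1(d\bar{u}_1|\bar{y}_1)\,K_2(d\bar{u}_2|\bar{y}_2), \]
which, via the linear map $K_i\mapsto P_i$, is linear (hence convex and concave) in each of $P_1$ and $P_2$ separately when the other is held fixed.

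The main technical step, and the step I expect to be the main obstacle, is to establish semicontinuity of $V$ in each argument under weak convergence, because condition (iii) gives continuity of $c$ only in the action variables and not in $\omega_0$. Fixing $P_2$ and integrating out $\bar{u}_2$ followed by $\bar{y}_2$ (using the conditional $\mu(d\bar{y}_2|\omega_0,\bar{y}_1)$), I would obtain an effective cost $\hat{c}_2(\omega_0,\bar{y}_1,\bar{u}_1)$ that is bounded, measurable in $(\omega_0,\bar{y}_1)$, and continuous in $\bar{u}_1$ (by dominated convergence). Since every $P_1\in L_C(\mu_1)$ shares the fixed marginal $\mu_1$ on $(\omega_0,\bar{y}_1)$, weak convergence in $L_C(\mu_1)$ reduces to convergence in the action conditionals, and the static independent reduction supplied by Assumption \ref{ABSContA3} enables the standard Carath\'eodory-function stable-convergence argument underlying Theorem \ref{existenceRelaxed3} to yield continuity of $P_1\mapsto\int\hat{c}_2\,dP_1=V(P_1,P_2)$. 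The symmetric argument handles continuity of $V(P_1,\cdot)$ on $L_C(\mu_2)$.

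With both $L_C(\mu_1)$ and $L_C(\mu_2)$ nonempty convex weakly compact, and $V$ bilinear with the required continuity/concavity/convexity in each argument, Ky Fan's minimax theorem applies: $\inf_{P_1}\sup_{P_2}V(P_1,P_2)=\sup_{P_2}\inf_{P_1}V(P_1,P_2)$, with both extrema attained. The attaining pair $(P_1^{*},P_2^{*})$ corresponds via the $L_{CCR}$ representation to team policies with independent intra-team common randomness, furnishing the required saddle-point equilibrium.
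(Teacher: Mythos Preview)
Your proposal is correct and follows essentially the same route as the paper: convexity and weak compactness of each $L_C(\mu_i)$ (via Assumption \ref{ABSContA3}, Theorem \ref{existenceRelaxed3}, and Theorem \ref{LCcompact}), reduction to an effective cost by fixing the opposing team and integrating out its variables, continuity of the resulting map by exploiting that the marginal on the exogenous variables is fixed, and then Fan's minimax theorem. The only notational difference is that the paper phrases the continuity step explicitly in terms of Sch\"al's $w$-$s$ topology on measures with a fixed exogenous marginal, whereas you describe the same mechanism as a Carath\'eodory/stable-convergence argument; these are the same device.
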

\begin{proof}
In what follows, we always assume a team policy $\bar{\gamma}_i = (\gamma^1_i, \dots, \gamma^{N_i}_i)$ is such that $u^k_i = \gamma^k_i(y^k_i, z_i)$, for some independent randomness $z_i \in [0,1]^{N_i}$ for each team, for $k = 1, \dots, N_i$, $i = 1,2$. I.e., policies are selected such that the induced strategic measures are in $L_C(\mu_1)$ and $L_C(\mu_2)$. Note that the measure on $(\Omega \times \prod_{j=1}^{N_1}\mathbb{Y}^j_1 \times \prod_{k=1}^{N_2}\mathbb{Y}^k_2)$ is fixed for both teams by the information structure $\mu$, and so $L_C(\mu_i)$ has a fixed marginal on the state and measurement spaces.

Let $L_C(\mu_i, \bar{\gamma}_i)$ be team $i$'s strategic measure induced by team policy $\bar{\gamma}_i$, for $i = 1,2$. Denote by $S_i$ the projection of the space $L_C(\mu_i)$ onto $\prod_{k=1}^{N_i}(\mathbb{Y}^{k}_{i} \times \mathbb{U}^{k}_{i})$, eliminating the state space from the measure and simplifying notation for convenience. Since the measure on the exogenous variables is fixed by the information structure $\mu$, $S_i$ will inherit convexity (as conditional independence will be preserved under convex combinations given that the opponent team policy is fixed) and weak compactness from $L_C(\mu_i)$ (following from the analysis in the proof of \cite[Theorem 3.2]{InformationHY}); see \cite[Theorem 5.2]{saldiyukselGeoInfoStructure} which shows that conditional independence is preserved when only one player perturbs the strategic measure (unlike the case where multiple players make simultaneous perturbations in \cite[Theorem 4.2]{saldiyukselGeoInfoStructure}). This applies in this context by viewing each team as a large single-agent.

%Note also that searching over measures in $S_i$ is equivalent to searching over strategic measures in $L_C(\mu_i)$, as the spaces are isomorphic. We will use $s_i^{\bar{\gamma}_i}$ to denote the projection of the specific strategic measure $L_C(\mu_i, \bar{\gamma}_i)$ onto $\prod_{k=1}^{N_i}(\mathbb{Y}^{k}_i \times \mathbb{U}^{k}_i)$. 

Now, without loss of generality, we will fix Team 2's policy as $\bar{\gamma}_2^*$ (which is an arbitrary policy that induces a strategic measure $s_2^*$ in $L_C(\mu_2)$). We then have:
\begin{align}
  &V_{G}^{\mu}(s_1, s_2^*) \nonumber\\
   &= \int s_1 (dy^1_1, \dots, dy^{N_1}_1, du^1_1, \dots, du^{N_1}_1) \dots \\ & \quad \quad \quad \quad  \left(\int \mu^{{\bar{\gamma}_2^*}}(d\omega_0,dy^1_2, \dots, dy^{N_2}_2, du^1_2, \dots, du^{N_2}_2 | y^1_1, \dots, y^{N_1}_1)c(\omega_0,u^1_1,\dots, u_1^{N_1}, u_2^1, \dots, u_2^{N_1})\right), \label{DeriveCondIndG}
\end{align} 
where $\mu^{{\bar{\gamma}_2}^*}$ denotes the strategic measure induced by Team 2's policy ${\bar{\gamma}_2}^*$. Define the bracketed term as $\bar{c}( y^1_1, \dots, y_1^{N_1}, u^1_1, \dots, u_1^{N_1})$. We can observe that, by assumption, $\bar{c}$ is continuous in $u^1_1, \dots, u_1^{N_1}$, and furthermore it is measurable in $y^1_1, \dots, y_1^{N_1}$. 

Recall the $w$-$s$ topology \cite{schal1975dynamic} on the set of probability measures ${\cal P}(\Omega_0 \times \mathbb{U})$; this is the coarsest topology under which $\int f(x,u) \nu(d\omega_0,du): {\cal P}(\Omega_0 \times \mathbb{U}) \to \mathbb{R}$ is continuous for every measurable and bounded $f$ which is continuous in $u$ for every $x$ (but unlike weak topology, $f$ does not need to be continuous in $x$). Now, since the exogenous variables are fixed, weak convergence in this setting is equivalent to $w$-$s$ convergence, and continuity in the exogenous variable is not needed here: Consider a sequence of measures $\{s_1\}_k$ which converges to $s_1$ weakly. We have that $\Bar{c}(u^1_1, \dots, u^1_{N_1}, y^1, \dots, y^1_{N_1})$ is continuous in $u^1_1, \dots, u^1_{N_1}$ by assumption. Since $\mu$ is fixed, the marginals on $\prod_{k = 1}^{N_2}\mathbb{Y}^2_k$ are fixed. Therefore, by \cite[Theorem 3.10]{schal1975dynamic} (or \cite[Theorem 2.5]{balder2001} and \cite{YukselWitsenStandardArXiv}), we can use the $w$-$s$ topology on the set of probability measures $\mathcal{P}(\prod_{k=1}^{N_2}\mathbb{Y}_2^k \times \mathbb{U}_2^k)$. And so we have continuity of $V^{\mu}_G$ in $s_1$ in the $w$-$s$ topology and, by the equivalence in this setting, the weak topology. 

This also holds for continuity in $s_2$ in the reverse case were we fix Team 1's strategic measure as an arbitrary $s_1^*$. 

Following compactness and convexity of $S_1$ and $S_2$, by the continuity (and linearity in each entry) of $V_{G}^{\mu}(s_1, s_2)$, by Fan's minimax theorem, \cite[Theorem 1]{Fan}, we have that an equilibrium exists:
\begin{equation}
    \min_{S_1}\max_{S_2} V_{G}^{\mu}(s_1, s_2) = \max_{S_2}\min_{S_1}V_{G}^{\mu}(s_1, s_2)\nonumber
\end{equation} 

\end{proof}

The setup where absolute continuity applies also between the teams would be a special case, which would make the proof presentation slightly more direct, as we note below.

\begin{assumption}\label{ABSContA1}
 \[\mu(d\omega_0;dy^1_1, \dots, dy^{N_1}_2;dy^1_2, \dots, dy^{N_2}_2) \ll \zeta(d\omega_0)\mu_1(dy^1_1, \dots, dy^{N_1}_2)\mu_2(dy^1_2, \dots, dy^{N_2}_2).\]
 \end{assumption}

In Theorem \ref{SPExistTvsT}, if we had also imposed Assumption \ref{ABSContA1}, the analysis following (\ref{DeriveCondIndG}) would be more direct. However, strictly speaking, this assumption is not needed as absolute continuity across the teams is not required (only within members of a team). 

\section{Comparison of Information Structures in Team-against-Team Zero-Sum Games}

We now establish a comparison result on information structures for stochastic teams. Before we do this, we present a brief review.

\subsection{Brief Review on Comparison of Information Structures}

We now recall the fundamental results of Blackwell and Le Cam in the comparison of information structures for single-player decision problems. First, we recall the standard notion of garbling.

\begin{definition}\label{garbling}
An information structure induced by some channel $Q_2$ is garbled (or stochastically degraded) with respect to another one, $Q_1$, if there exists a channel $Q'$ on $\mathbb{Y}\times \mathbb{Y}$ such that
\[
Q_2(B|x) = \int_{\mathbb{Y}} Q'(B|y) Q_1(dy|x), \; B \in {\cal
  B}(\mathbb{Y}),\;P \; a.s. \;x \in \mathbb{X}.
\]
\end{definition}
We also define the following notion:
\begin{definition}\label{moreinformative}
An information structure $\mu$ is \textit{more informative than} another information structure $\nu$ if 
\begin{equation}
    \inf_{\gamma \in \Gamma} E^{\nu,\gamma}_P[c(x,u)] \geq \inf_{\gamma \in \Gamma} \nonumber
E^{\mu,\gamma}_P[c(x,u)]
\end{equation}
for all single player decision problems $(c(x,u), \mathbb{U})$. 
\end{definition}

We now recall Blackwell's classical result:

\begin{theorem} \label{BlackwellInformative} [Blackwell \cite{Blackwell}]
Let $\mathbb{X}, \mathbb{Y}$ be finite spaces. The following are equivalent:
\begin{itemize}
\item[(i)]
$Q_2$ is weakly stochastically degraded with respect to $Q_1$ (that is, a garbling of $Q_1$).
\item[(ii)] The information structure induced by channel $Q_1$ is more informative than the one induced by channel $Q_2$ for all single player decision problems with finite $\mathbb{U}$.
\end{itemize}
\end{theorem}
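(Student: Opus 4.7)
The plan is to prove the two implications separately. The direction (i) $\Rightarrow$ (ii) is a short simulation argument, while (ii) $\Rightarrow$ (i) is the substantive content, which I would establish via LP duality in the Blackwell--Sherman--Stein style by showing that non-existence of a garbling produces a ``bad'' cost function violating (ii).

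For (i) $\Rightarrow$ (ii), I would fix any pure policy $\gamma_2 : \mathbb{Y} \to \mathbb{U}$ in the $Q_2$-problem and define a randomized $Q_1$-policy by
\[
\kappa(du \mid y_1) \defeq \int Q'(dy_2 \mid y_1)\, \delta_{\gamma_2(y_2)}(du).
\]
The garbling identity $Q_2(\cdot \mid x) = \int Q'(\cdot \mid y_1)\, Q_1(dy_1 \mid x)$ implies that the joint law of $(x,u)$ under $(\zeta, Q_1, \kappa)$ coincides with that under $(\zeta, Q_2, \gamma_2)$, so the expected costs agree. Since the infimum over randomized policies equals the infimum over pure policies (the objective being linear in the policy), this yields $\inf_\gamma E^{Q_1,\gamma}[c] \le \inf_\gamma E^{Q_2,\gamma}[c]$ for every bounded $c$, which is exactly (ii).

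For (ii) $\Rightarrow$ (i), I would recast the existence of a garbling as a linear feasibility question: find $Q' = [Q'(y_2 \mid y_1)]_{y_1, y_2}$ with $Q'(y_2 \mid y_1) \ge 0$, $\sum_{y_2} Q'(y_2 \mid y_1) = 1$ for every $y_1$, and $\sum_{y_1} Q'(y_2 \mid y_1) Q_1(y_1 \mid x) = Q_2(y_2 \mid x)$ for every $x, y_2$. If this system were infeasible, Farkas' lemma applied to the associated LP would produce dual multipliers $\{\lambda(x, y_2)\}$; taking $\mathbb{U} = \mathbb{Y}$ and defining $c(x, u) \defeq \lambda(x, u)$ (after absorbing the normalization constraints), I would verify that the optimal value of the decision problem $(c, \mathbb{U})$ under $Q_1$ strictly exceeds the optimal value under $Q_2$, contradicting (ii). Hence the LP is feasible and the desired garbling exists. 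The main obstacle is precisely this last translation: one must choose $\mathbb{U}$ and build $c$ so that $\inf_{\gamma} E[c(x, \gamma(y_i))]$ under each $Q_i$ matches the corresponding dual objective. This is the classical Blackwell--Sherman--Stein manoeuvre, and it is its Borel-space analogue --- invoking measurable selection, disintegration, and Le Cam's deficiency --- that the authors will subsequently need to generalize to $n$-player teams in the sequel.
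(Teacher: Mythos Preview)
The paper does not supply its own proof of this theorem. Theorem~\ref{BlackwellInformative} is stated as Blackwell's classical result with a citation, followed only by the one-line remark that (i)$\Rightarrow$(ii) is direct (referring to \cite[Theorem 4.3.2]{YukselBasarBook}) and that the converse was settled by Blackwell in the finite case and by \cite{pkeski2008comparison}, \cite{InformationHY} in the game and standard Borel settings. There is therefore nothing in the paper to compare your argument against for this particular statement.

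Your sketch is correct and is one of the standard routes: the simulation argument for (i)$\Rightarrow$(ii) is exactly the ``direct'' implication the paper alludes to, and the Farkas/LP-duality argument for (ii)$\Rightarrow$(i) with $\mathbb{U}=\mathbb{Y}$ is a legitimate finite-space proof. One small point: in your last paragraph you write that the dual certificate would force the optimal value under $Q_1$ to \emph{strictly exceed} that under $Q_2$; since ``more informative'' in Definition~\ref{moreinformative} requires the $Q_1$-value to be \emph{no larger}, you should say the $Q_2$-value is strictly smaller than the $Q_1$-value (equivalently, the $Q_1$-infimum is strictly larger), which is what you intend but the phrasing could be tightened.

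Where a genuine methodological comparison becomes relevant is in the paper's own generalization, Theorem~\ref{Blackwellteam}. There the authors do \emph{not} use LP duality or Farkas; instead they set $\mathbb{U}^i=\mathbb{Y}^i$, show the set $K_Z\mu$ of correlated garblings is closed and convex (via compactness of $L_C(\mu)$, Theorem~\ref{LCcompact}), and then apply the Hahn--Banach separation theorem in a locally convex space of measures equipped with the $w$-$s$ topology to produce the separating cost function. Your anticipated ``Borel-space analogue via measurable selection, disintegration, and Le Cam deficiency'' is not the route the paper takes; the Hahn--Banach/compactness argument replaces the LP machinery entirely and is what allows the jump to standard Borel spaces and $n$ players without an explicit dual construction.
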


The implication (i)$\rightarrow$(ii) is direct and is applicable to general spaces, as noted in Theorem 4.3.2 of \cite{YukselBasarBook}; the converse direction is more involved: the finite case was studied by Blackwell, the zero-sum games with finite spaces was studied in \cite{pkeski2008comparison}, and the standard Borel case was studied in \cite{InformationHY}.

Le Cam developed a further extension of Blackwell's theorem, and defined the notions of Le Cam deficiency and Le Cam distance. 

\begin{definition}
The Le Cam deficiency of an information structure $\mu$ with respect to another information structure $\nu$ is
$\delta(\mu, \nu) =: \inf_{\kappa \in K}\|\kappa \mu - \nu\|_{TV}$
\end{definition}

We now recall the following theorem due to Le Cam, which holds in the standard Borel setup:

\begin{theorem}[Le Cam \cite{LeCamSufficiency}]\label{LeCam}
Let $\epsilon > 0$ be fixed, $\delta(\mu, \nu) < \epsilon$ if and only if for any policy $\gamma_{\nu}$ under $\nu$ and any game with a bounded cost function $\|c\|_{\infty} \leq 1$, there exists a policy $\bar{\gamma}_{\mu}$ under $\mu$ such that
\begin{equation}
     E^{\mu}_{P}[c(x, \bar{\gamma}_{\mu}(y))]< E^{\nu}_{P}[c(x, \gamma_{\nu}(y))] + \epsilon \nonumber
\end{equation}
\end{theorem}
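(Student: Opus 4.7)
I will prove the two directions of the ``iff'' separately. The forward direction ($\delta(\mu,\nu) < \epsilon$ implies the cost inequality) is constructive: pick a channel $\kappa^{\star}$ achieving $\|\kappa^{\star}\mu - \nu\|_{TV} < \epsilon$ from the infimum definition of $\delta$. Given any bounded cost $c$ and any policy $\gamma_{\nu}$ on $\nu$'s measurement space $\mathbb{Y}_{\nu}$, define the randomized policy $\bar\gamma_{\mu}$ on $\mu$'s measurement space $\mathbb{Y}_{\mu}$ that first samples $\tilde y \sim \kappa^{\star}(\cdot \mid y)$ and then plays $\gamma_{\nu}(\tilde y)$. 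The joint law of $(x,\tilde y)$ induced under $\mu$ is exactly $\kappa^{\star}\mu$, so
\[
E^{\mu}_{P}[c(x, \bar\gamma_{\mu}(y))] - E^{\nu}_{P}[c(x, \gamma_{\nu}(y))] = \int c(x, \gamma_{\nu}(y)) \, d(\kappa^{\star}\mu - \nu)(x,y),
\]
bounded in absolute value by $\|c\|_{\infty}\,\|\kappa^{\star}\mu - \nu\|_{TV} < \epsilon$. If one insists on a deterministic policy, one enlarges the measurement by an independent $U[0,1]$ variable.

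The reverse direction is more delicate and requires a minimax argument. I would specialize the hypothesis to the auxiliary game with action space $\mathbb{U} = \mathbb{Y}_{\nu}$, take $\gamma_{\nu} = \mathrm{id}_{\mathbb{Y}_{\nu}}$, and cost $c(x,u) = f(x,u)$ for a bounded continuous $f$ with $\|f\|_{\infty} \leq 1$. A (randomized) policy $\bar\gamma_{\mu}: \mathbb{Y}_{\mu} \to \mathbb{Y}_{\nu}$ is then precisely a garbling kernel $\kappa$, and the hypothesis provides, for each such $f$, a $\kappa_{f}$ with $\int f \, d(\kappa_{f}\mu - \nu) < \epsilon$. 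What is needed for $\delta(\mu,\nu) < \epsilon$ is a single $\kappa$ making this hold uniformly in $f$, which is exactly a minimax swap. Let $\mathcal{K}$ denote the set of Markov kernels $\mathbb{Y}_{\mu} \to \mathbb{Y}_{\nu}$, identified with joint laws on $\mathbb{Y}_{\mu} \times \mathbb{Y}_{\nu}$ having the fixed $\mathbb{Y}_{\mu}$-marginal inherited from $\mu$; the bilinear functional $\Phi(\kappa, f) \defeq \int f \, d(\kappa\mu - \nu)$ is affine in each argument. By Theorem \ref{existenceRelaxed3} applied to the one-player reduction with action space $\mathbb{Y}_{\nu}$, under the absolute continuity reduction, $\mathcal{K}$ is convex and weakly compact, and $\Phi(\cdot, f)$ is continuous in $\kappa$ for each bounded continuous $f$. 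Fan's minimax theorem (\cite[Theorem 1]{Fan}, as in the proof of Theorem \ref{SPExistTvsT}) then yields
\[
\inf_{\kappa \in \mathcal{K}} \sup_{f} \Phi(\kappa, f) \;=\; \sup_{f} \inf_{\kappa \in \mathcal{K}} \Phi(\kappa, f) \;<\; \epsilon .
\]
Finally, inner regularity of probability measures on Polish spaces (Luzin/Tietze) gives $\sup\{\int f \, d(\kappa\mu - \nu) : f \text{ bdd.\ cts.},\ \|f\|_{\infty} \le 1\} = \|\kappa\mu - \nu\|_{TV}$, so the displayed inequality reads $\delta(\mu,\nu) < \epsilon$, as required.

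\textbf{Main obstacle.} The chief difficulty lies in justifying the minimax swap and in passing from bounded continuous test functions to the full total variation norm. Compactness of $\mathcal{K}$ under weak convergence relies on the absolute continuity reduction (Assumption \ref{AbsoluteContIndRedG}); without it, one must deploy the $w$--$s$ topology argument of Theorem \ref{SPExistTvsT} and re-verify that $\Phi(\cdot,f)$ remains continuous there. The density step from continuous $f$ back to arbitrary bounded measurable $f$ is standard but must be done with care since the cost functions of the theorem's hypothesis are required to be continuous in the action. Handling randomized versus deterministic $\bar\gamma_{\mu}$ via an independent $U[0,1]$ dilation is routine.
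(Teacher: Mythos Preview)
The paper does not prove Theorem~\ref{LeCam}; it is merely recalled as a classical result of Le~Cam. The natural point of comparison is therefore the paper's own proof of the team extension, Theorem~\ref{LeCamTeams}, whose single-player specialization would be a proof of Theorem~\ref{LeCam}.

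For the forward direction your argument is essentially identical to the paper's: choose a near-optimal garbling $\kappa^{\star}$, define $\bar\gamma_{\mu}$ as ``garble then apply $\gamma_{\nu}$,'' and bound the cost difference by $\|c\|_{\infty}\|\kappa^{\star}\mu-\nu\|_{TV}$.

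For the converse direction you are actually more careful than the paper. Both arguments specialize to $\mathbb{U}=\mathbb{Y}_{\nu}$ and take $\gamma_{\nu}=\mathrm{id}$, reducing the hypothesis to $\sup_{\|c\|\le 1}\inf_{\kappa}\int c\,d(\kappa\mu-\nu)<\epsilon$. The paper's chain then passes to $\inf_{\kappa}\sup_{\|c\|\le 1}|\int c\,d(\kappa\mu-\nu)|=\inf_{\kappa}\|\kappa\mu-\nu\|_{TV}$ via the \emph{trivial} inequality $\sup\inf\le\inf\sup$, which runs the wrong way: knowing the left side is below $\epsilon$ does not by itself force the right side below $\epsilon$. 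You correctly identify that a genuine minimax theorem is required here, and you supply one (Fan's theorem, using convexity and weak compactness of the kernel set $\mathcal{K}$ via Theorem~\ref{existenceRelaxed3}). Your additional step---recovering the full total variation norm from bounded continuous test functions via Lusin/inner regularity on Polish spaces---is also a point the paper's analogous proof leaves implicit. In short, your proposal follows the same skeleton but closes a gap that the paper's proof of the team version leaves open.
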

We also have the following definition from Le Cam:
\begin{definition}
The Le Cam distance between two information structures $\mu$ and $\nu$ is
\begin{equation}
    \Delta(\mu, \nu) := \max\{\delta(\mu,\nu), \delta(\nu,\mu)\}. \nonumber
\end{equation}
\end{definition}
Le Cam's results generalize those of Blackwell, and can be used to define a topology on information structures based on difference in value over all games; this was done for zero-sum games in \cite{pkeski2019value}. %, however the relation to Le Cam's result was not established. 

\subsection{Comparison of Information Structures in Stochastic Teams}
Now, we move to extending Blackwell's results on comparison of information structures for single-player decision problems to team problems. This was done for two-player teams by Lehrer, Rosenberg, and Shmaya in \cite{LehrerRosenbergShmaya} for problems with finite spaces, see also \cite[Chapter 4]{YukselBasarBook}. Here, these results are extended to arbitrary $n$-player teams with standard Borel spaces.

We define the notion of garbling with potentially common randomness for a team, generalizing the notion from Definition \ref{garbling}:

\begin{definition}\label{commonGarbleDefinition}
An information structure $\nu$ is a correlated garbing of an information structure $\mu$ with potentially common randomness if there exists $\eta \in \mathcal{P}(\mathcal{W})$, where $\mathcal{W} = [0,1]^n$, such that:
\[\nu(B) = \int_{B \times \mathcal{W}} \eta(dz)\mu(d\omega_0, d\bar{y}) \prod_k \Pi^k(d\tilde{y}^k|y^k, z), \quad \forall B \in \mathcal{B}(\Omega \times \prod_{i = 1}^n \mathbb{Y}^i)\]
for some stochastic kernels $\Pi^k$ from $(\mathbb{Y}^k \times \mathcal{W})$ to $\mathbb{Y}^k$, for $k = 1, \dots, n$.  
\end{definition}
Note that here $\eta$ is an arbitrary probability measure, as in the definition of $L_C(\mu)$. Assume $L_C(\mu)$ is closed (see Theorem \ref{LCcompact}, a sufficient condition being the independent static reducibility under Assumption \ref{AbsoluteContIndRedG}. % stated below. 

Let $\mathcal{E}$ denote the set of all team problems with compact action spaces and cost functions that are continuous in player actions for every $\omega_0$. 

\begin{theorem}\label{Blackwellteam}
Let Assumption \ref{AbsoluteContIndRedG} hold so that $L_C(\mu)$ is closed. For team problems with static reductions, an information structure $\mu$ is more informative than an information structure $\nu$ (for all games in $\mathcal{E})$ if only if $\nu$ is a garbling of $\mu$ for each player, with potentially common randomness among the garblings. 
\end{theorem}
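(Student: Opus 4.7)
The plan is to prove the two directions separately. For sufficiency (``$\nu$ is a correlated garbling of $\mu$'' implies ``$\mu$ is more informative''), the construction in Definition \ref{commonGarbleDefinition} supplies a common-randomness distribution $\eta$ and kernels $\Pi^k$. Any team policy $\bar\gamma_\nu$ used on $\nu$ can be simulated on $\mu$ by jointly sampling $z \sim \eta$ as shared common randomness, then for each $k$ drawing $\tilde y^k \sim \Pi^k(\cdot \mid y^k, z)$ and setting $u^k := \gamma^k_\nu(\tilde y^k)$. The composed randomized policy lies in $L_C(\mu)$ and induces exactly the joint law of $(\omega_0, u^1, \ldots, u^n)$ that $\bar\gamma_\nu$ induces on $\nu$, so it attains the same expected cost. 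Taking infima yields $J^*(c,\mu) \leq J^*(c,\nu)$ for every game in $\mathcal{E}$.

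For necessity I argue by contraposition: assuming $\nu$ is not a correlated garbling of $\mu$, I construct a game in $\mathcal{E}$ where $\nu$ strictly outperforms $\mu$. Write $\mathcal{G}(\mu) \subseteq \mathcal{P}(\Omega_0 \times \prod_k \mathbb{Y}^k)$ for the set of correlated garblings of $\mu$. Setting $\mathbb{U}^k := \mathbb{Y}^k$, Definition \ref{commonGarbleDefinition} identifies $\mathcal{G}(\mu)$ with the (weakly continuous) projection of $L_C(\mu)$ onto $\Omega_0 \times \prod_k \mathbb{U}^k$. Convexity of $L_C(\mu)$ and its weak compactness under Assumption \ref{AbsoluteContIndRedG} (via Theorem \ref{LCcompact}) therefore transfer to $\mathcal{G}(\mu)$, so the latter is convex and weakly closed. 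Since $\nu \notin \mathcal{G}(\mu)$, Hahn--Banach separation in the weak topology supplies a bounded continuous $h: \Omega_0 \times \prod_k \mathbb{Y}^k \to \mathbb{R}$ and some $\epsilon > 0$ with
\[\int h\,d\nu + \epsilon \leq \int h\,d\tilde\mu, \qquad \forall\,\tilde\mu \in \mathcal{G}(\mu).\]

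I now build the separating game in $\mathcal{E}$: take $\mathbb{U}^k := \mathbb{Y}^k$ (compactified if necessary, with continuous extension of $h$) and cost $c(\omega_0, u^1, \ldots, u^n) := h(\omega_0, u^1, \ldots, u^n)$, which is bounded and continuous in the actions for every $\omega_0$. By the same identification, for any information structure $\tilde\mu$ every policy in $L_C(\tilde\mu)$ pushes $\tilde\mu$ forward to an element of $\mathcal{G}(\tilde\mu)$; hence $J^*(c,\mu) = \inf_{\tilde\mu' \in \mathcal{G}(\mu)} \int h\,d\tilde\mu'$, while $J^*(c,\nu) \leq \int h\,d\nu$ via the admissible identity strategy $\gamma^k(y^k) = y^k$. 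The separation inequality then forces $J^*(c,\nu) + \epsilon \leq J^*(c,\mu)$, contradicting more-informativeness. The principal obstacle is establishing convexity and weak closedness of $\mathcal{G}(\mu)$, which is precisely what Theorem \ref{LCcompact} delivers under the static-reduction Assumption \ref{AbsoluteContIndRedG}---this is why that hypothesis is invoked in the statement. A secondary technicality is arranging compact action spaces for the constructed game, handled via a standard Polish compactification of $\mathbb{Y}^k$ together with continuous extension of the bounded continuous separating functional $h$.
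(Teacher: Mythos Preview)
Your proposal is correct and follows essentially the same route as the paper: simulate $\nu$-policies on $\mu$ for sufficiency, and for necessity set $\mathbb{U}^k=\mathbb{Y}^k$, identify the correlated garblings of $\mu$ with the projection of $L_C(\mu)$, use convexity and closedness from Theorem~\ref{LCcompact}, then Hahn--Banach separate and read the separating functional as a cost with the identity policy under $\nu$ witnessing the strict inequality. The only minor deviation is that you separate in the ordinary weak topology (obtaining $h$ continuous in all variables), whereas the paper separates in the $w$--$s$-type topology to obtain a cost continuous in actions for each fixed $\omega_0$; since your $h$ still defines a game in $\mathcal{E}$, this difference is immaterial.
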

\begin{proof}(Sketch)
\textbf{Part 1:} First we study the forward direction. 
Let $\mu$ be an information structure, and let $\nu$ be another information structure such that the measurements under $\nu$ are given by $y^i_{\nu} = h^i(y^i_\mu, Z)$ for $i = 1, \dots, n$ and some independent randomness $Z$, that may be common between players. 

We can clearly observe that any outcome for a game achievable under $\nu$ is achievable under $\mu$, since outcomes are determined by policies, which are measurable functions from measurements to actions, and the measurements under $\nu$ can be simulated using the measurements from $\mu$ and the independent randomness $Z$, and thus the resulting expected outcomes can also be achieved under $\mu$. 

\textbf{Part 2:} We now establish the converse direction. Let $\mu, \nu \in \mathcal{P}(\Omega_0 \times \mathbb{Y}^1 \times \dots \times \mathbb{Y}^n)$, and assume $\mu$ is more informative than $\nu$. Since this means the expected cost is lower under $\mu$ than under $\nu$ for all games, in particular it is true for games where $\mathbb{U}^i = \mathbb{Y}^i$. We will fix $\mathbb{U}^i = \mathbb{Y}^i$ and evaluate this class of games for the remainder of the proof.

Let $K_Z \mu$ denote the space of all garblings of $\mu$ with common randomness allowed. I.e. 
\[K_Z\mu = \{\nu \in \mathcal{P}(\Omega_0 \times \mathbb{Y}^1 \times \dots \times \mathbb{Y}^n) : y^i_{\nu} = h^i(y^i_\mu, Z)\}\]
for some independent randomness $Z$ that may be common between players. 

We assume that $\nu \in K_Z\mu$ and proceed with a proof by contradiction.

Note that since $L_C(\mu)$ is closed under Assumption \ref{AbsoluteContIndRedG}, via Theorem \ref{LCcompact}. Furthermore, this set is convex, and since $\mathbb{Y}^i = \mathbb{U}^i$ here, we have that $K_Z\mu$ will be closed and convex. This follows from noticing that $L_C(\mu)$ is the set of all measures on $(\Omega \times \prod_{i=1}^n (\mathbb{Y}^i \times \mathbb{U}^i))$ with fixed marginal $\mu$ on $(\Omega \times \prod_{i=1}^n \mathbb{Y}^i)$, where the measure on the action spaces is created through applying all possible policies (which we can view as garblings) with common randomness; since each player's action space and measurement space are assumed to be the same, this means that $K_Z\mu$ is the projection of $L_C(\mu)$ onto the state and action spaces, and so $K_Z\mu$ is also closed and convex.

We will now use the Hahn-Banach Separation Theorem for Locally Convex Spaces. To apply this, we require local convexity of $\mathcal{P}(\Omega_0 \times \prod_{k=1}^n \mathbb{Y}^k)$, and so we define the locally convex space of probability measures with the following notion of convergence: We say that $P_n \rightarrow P$ if 
\begin{align*} \int f(\omega_0,y^1, \dots, y^n)P_n(d\omega_0,dy^1, \dots, dy^n) \rightarrow  \int f(\omega_0,y^1, \dots, y^n)P(d\omega_0,dy^1, \dots, dy^n)\end{align*}

for every measurable and bounded function which is continuous in $(y^1,\dots, y^n)$ for every $\omega_0$. We note that our measures must still have fixed marginal $\zeta$ on $\Omega_0$.

Since continuous and bounded functions {\it separate} probability measures (in the sense that, if the integrations of two measures with respect to continuous functions are equal, the measures must be equal), we can represent every continuous linear map on our space using the form, following \cite[Theorem 3.10]{rudin1991functional}:
\begin{align*}
    F = \int f(\omega_0,y^1, \dots, y^n) \nu (d\omega_0,dy^1, \dots, dy^n), \nonumber
\end{align*}
for some measurable and bounded function $f(\omega_0,y^1, \dots, y^n)$ continuous for every $\omega_0$. It also follows that, given this notion of convergence, $\mathcal{P}(\Omega_0 \times \prod_{k=1}^n \mathbb{Y}^k)$ is a locally convex space. 

Thus, since $\nu$ is compact and convex and $K_Z\mu$ is closed and convex, following a combination of \cite[Theorem 3.4]{rudin1991functional} and \cite[Theorem 3.10]{rudin1991functional}, we have that there exists a continuous and bounded function $f: \Omega_0 \times \mathbb{Y}^1 \times \dots \times \mathbb{Y}^n \rightarrow \mathbb{R}$ such that there exist constants $D_2 > D_1$ with
\[\langle \nu, f \rangle < D_1, \langle \kappa_Z\mu, f \rangle> D_2\quad \quad \forall \kappa_Z\mu \in K_Z\mu. \]

That is to say, \[\int f(\omega_0,y^1, \dots, y^n) \nu(d\omega_0,dy^1, \dots, dy^n) < \int f(\omega_0,y^1, \dots, y^n) \kappa_Z\mu(d\omega_0,dy^1, \dots, dy^n).\] 

Now, since $\mathbb{Y}^i = \mathbb{U}^i$ we can view $f$ as the cost function for some game, where $\langle \nu, f \rangle$ gives the expected cost for the game with information structure $\nu$ when each player plays the identity policy.

We note that because the inequality holds for all $\kappa_Z\mu \in K_Z\mu$, in particular it holds for the minimum over all such measures, so we have the following:
\begin{align*}
   \langle \tilde{\gamma}\nu, f \rangle &< \inf_{\kappa_Z\mu \in K_Z\mu} \langle \kappa_Z\mu, f \rangle \\
   &= \inf_{\kappa_Z\mu \in K_Z\mu}\int f(\omega_0,y^1, \dots, y^n) \kappa_Z\mu(d\omega_0,dy^1, \dots, dy^n) 
\end{align*}
But, this is equivalent to finding the optimal strategic measure in $L_C(\mu)$ to play. 

Thus, we have that, for the cost function $f$, the performance under the identity policy under information structure $\nu$ is strictly better than the performance under the optimal policy for information structure $\mu$. This contradicts the fact that $\mu$ is more informative than $\nu$. Therefore, we must have that $\nu \in K_Z{\mu}$.
\end{proof}

In addition to the extension of Blackwell's results, we can also extend Le Cam's theory to team problems. 
\begin{theorem}\label{LeCamTeams}
Let $\epsilon > 0$ be fixed, $\delta(\mu, \nu) < \epsilon$ if and only if for any team policy $\bar{\gamma}_\nu$ under $\nu$ and any team problem in $G \in \mathcal{E}$ with a bounded cost function $\|c\|_{\infty} \leq 1$, there exists a team policy $\bar{\gamma}_{\mu}$ under $\mu$ such that
\begin{equation}
     J(G, \mu, \bar{\gamma}_{\mu}) < J(G, \nu, \bar{\gamma}_\nu) + \epsilon. \nonumber
\end{equation}
\end{theorem}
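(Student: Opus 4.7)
The plan is to adapt Le~Cam's classical argument (Theorem~\ref{LeCam}) to the $n$-player team setting, with correlated garblings (Definition~\ref{commonGarbleDefinition}) playing the role of a single Markov kernel, and re-using the locally convex topology and compactness setup developed in the proof of Theorem~\ref{Blackwellteam}. The two directions of the equivalence will be handled separately: the forward direction by a direct simulation/composition argument, and the converse by a Hahn--Banach/minimax separation as in Theorem~\ref{Blackwellteam}.

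For the forward direction, starting from $\delta(\mu,\nu)<\epsilon$ I pick a correlated garbling $\kappa$ (with common-randomness law $\eta\in\mathcal{P}(\mathcal{W})$ and kernels $\{\Pi^k\}$) satisfying $\|\kappa\mu-\nu\|_{TV}<\epsilon$. Given any $\bar\gamma_\nu$ and any $G\in\mathcal{E}$ with $\|c\|_\infty\le 1$, I construct $\bar\gamma_\mu$ by prepending the garbling to $\bar\gamma_\nu$: each player $k$ first samples $\tilde y^k \sim \Pi^k(\cdot\mid y^k_\mu,z)$ using the shared randomness $z\sim\eta$, and then plays $\gamma^k_\nu(\tilde y^k)$. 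Writing $\bar c(\omega_0,\bar y):=E[c(\omega_0,\bar\gamma_\nu(\bar y))]$ (which is bounded by $1$), the induced strategic measure gives
\[
J(G,\mu,\bar\gamma_\mu)-J(G,\nu,\bar\gamma_\nu)=\int \bar c\, d(\kappa\mu-\nu)\;\le\; \|\bar c\|_\infty\,\|\kappa\mu-\nu\|_{TV}\;<\;\epsilon.
\]

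For the converse I argue the contrapositive: assume $\delta(\mu,\nu)\ge\epsilon$ and exhibit a game and a $\bar\gamma_\nu$ violating the bound. Specializing to $\mathbb{U}^i=\mathbb{Y}^i$ as in Theorem~\ref{Blackwellteam}, I let $K_Z\mu$ denote the set of correlated garblings of $\mu$ viewed as measures on $\Omega_0\times\prod_i\mathbb{Y}^i$; by Theorem~\ref{LCcompact} and the analysis of Theorem~\ref{Blackwellteam}, $K_Z\mu$ is convex and compact in the locally convex topology generated by bounded measurable functions continuous in $\bar y$ for each $\omega_0$. Applying Fan's minimax theorem \cite{Fan} to the bilinear pairing $(\kappa,g)\mapsto\int g\,d(\nu-\kappa\mu)$, together with a Lusin-type density argument so that the supremum over admissible $g$ still recovers the total variation norm, yields
\[
\delta(\mu,\nu)\;=\;\inf_{\kappa\mu\in K_Z\mu}\sup_g\int g\, d(\nu-\kappa\mu)\;=\;\sup_g\inf_{\kappa\mu\in K_Z\mu}\int g\, d(\nu-\kappa\mu).
\]
For every $\eta>0$ there is then some admissible $g^*$ with $\inf_{\kappa}\int g^*\, d(\nu-\kappa\mu)>\epsilon-\eta$. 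Taking $c:=-g^*\in\mathcal{E}$ (bounded by $1$ and continuous in actions since $\mathbb{U}^i=\mathbb{Y}^i$) and $\bar\gamma_\nu$ the identity policy, every $\bar\gamma_\mu\in L_C(\mu)$ produces a strategic measure whose projection is some $\kappa\mu\in K_Z\mu$, so $J(G,\mu,\bar\gamma_\mu)-J(G,\nu,\bar\gamma_\nu)>\epsilon-\eta$ for every $\bar\gamma_\mu$; passing to the infimum over $\bar\gamma_\mu$ and sending $\eta\downarrow 0$ contradicts the hypothesis.

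The hard part will be the minimax step in the converse: I must verify that Fan's theorem applies in the locally convex topology of Theorem~\ref{Blackwellteam} (compactness of $K_Z\mu$ from Theorem~\ref{LCcompact}, affinity of the pairing in each argument, and upper semicontinuity in $\kappa$) and that restricting $g$ to functions continuous in $\bar y$ for each $\omega_0$ does not shrink the supremum relative to the full-$L^\infty$ characterization of total variation. The remaining $\eta$-bookkeeping is routine but must be carried out carefully so that the strict $\,>\epsilon-\eta\,$ gap obtained from Fan's equality is converted into a strict contradiction with the strict hypothesis after passing to the infimum over team policies.
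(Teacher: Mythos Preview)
Your forward direction is essentially the paper's: both compose a near-optimal correlated garbling with $\bar\gamma_\nu$ and bound the cost difference by $\|c\|_\infty\|\kappa\mu-\nu\|_{TV}$.

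For the converse you take a genuinely different route. The paper argues directly: restricting to $\mathbb{U}^i=\mathbb{Y}^i$ and $\bar\gamma_\nu=\gamma^{id}$, it writes the chain
\[
\sup_{\|c\|\le 1}\bigl(J^*(G,\mu)-J(G,\nu,\gamma^{id})\bigr)
=\sup_{\|c\|\le 1}\Bigl(\inf_{\kappa}\int c\,d(\kappa\mu)-\int c\,d\nu\Bigr)
\le \inf_{\kappa}\sup_{\|c\|\le 1}\Bigl|\cdots\Bigr|=\inf_{\kappa}\|\kappa\mu-\nu\|_{TV},
\]
and then concludes $\delta(\mu,\nu)<\epsilon$ from the hypothesis. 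Strictly read, that last step uses only the easy minimax inequality $\sup\inf\le\inf\sup$, which points the wrong way for the conclusion; your insertion of Fan's theorem to force $\sup_g\inf_\kappa=\inf_\kappa\sup_g$ is exactly what is needed to close this and is a real improvement in rigor. The price is that you must verify Fan's hypotheses in the $w$-$s$/locally convex topology (compactness of $K_Z\mu$ from Theorem~\ref{LCcompact}, bilinearity, and continuity in $\kappa$), and you flag these correctly.

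There are, however, two gaps in your converse that are not merely bookkeeping. First, the ``Lusin-type density'' step---that restricting $g$ to bounded functions continuous in $\bar y$ for each $\omega_0$ still recovers $\|\nu-\kappa\mu\|_{TV}$---is nontrivial: the TV-optimal $g$ is the sign of a Radon--Nikodym derivative, which is merely measurable, and you need to show it can be approximated within the class $\mathcal{E}$ admits. The paper implicitly takes the sup over all bounded measurable $c$ at this point and does not address the restriction either, so this is a shared lacuna, but your proof outline promises it and should deliver it. Second, and more seriously, your final $\eta\downarrow 0$ step does not produce a contradiction. From $\delta(\mu,\nu)\ge\epsilon$ and minimax you obtain, for each $\eta>0$, a game $G_\eta$ with $\inf_{\bar\gamma_\mu}J(G_\eta,\mu,\bar\gamma_\mu)-J(G_\eta,\nu,\gamma^{id})>\epsilon-\eta$; but the hypothesis only asserts the strict bound $<\epsilon$ for each fixed game, and $\epsilon-\eta<\epsilon$ is perfectly compatible with that. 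To get an actual contradiction you must either show the outer $\sup_g$ is attained (e.g.\ via compactness of the unit ball of admissible $g$ in a topology making $g\mapsto\inf_\kappa\int g\,d(\nu-\kappa\mu)$ upper semicontinuous) so that $\eta=0$ is admissible, or else run the argument directly rather than by contrapositive and accept the conclusion $\delta(\mu,\nu)\le\epsilon$ instead of strict inequality.
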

\begin{proof}
We note that, when selecting a team policy $\bar{\gamma}$ under an information structure, we view the selection of the team policy as the selection of both the independent randomness $z \in [0,1]^N$ and the maps $\gamma^i: \mathbb{Y}^i \times [0,1]^N \rightarrow \mathbb{U}^i$ for each player. Since $z$ is independent of the other random variables, we will, with a slight abuse of notation, allow the dependence of $z$ to be absorbed into the $\gamma^i$ terms, and thus denote them as maps from $\mathbb{Y}^i$ to $\mathbb{U}^i$. 

\textbf{Step 1.} First we show the forward direction. If $\delta(\mu, \nu) < \epsilon$, then by definition $\inf_{\kappa}\|\kappa \mu - \nu \|_{TV} < \epsilon$. Let $\gamma_{\nu}$ be an arbitrary team policy under $\nu$. Then we have the following:
\begin{align*}
& J(G, \mu, \gamma_{\nu}) - J(G, \nu, \gamma_{\nu}) \\
    & = \int c(\omega_0, \gamma^1_\nu(y^1), \dots, \gamma^n_\nu(y^n))\nu(d\omega_0, dy^1, \dots, dy^n) \\ & \quad \quad \quad \quad - \int c(\omega_0, \gamma^1_\nu(y^1), \dots, \gamma^n_\nu(y^n))\mu(d\omega_0, dy^1, \dots, dy^n) \\
    & \leq \int c(\omega_0, \gamma^1_\nu(y^1), \dots, \gamma^n_\nu(y^n))\nu(d\omega_0, dy^1, \dots, dy^n) \\ & \quad \quad \quad \quad - \inf_{\kappa} \int c(\omega_0, \gamma^1_\nu(y^1), \dots, \gamma^n_\nu(y^n))\kappa\mu(d\omega_0, dy^1, \dots, dy^n) \\
    &= \inf_{\kappa} (\int c(\omega_0, \gamma^1_\nu(y^1), \dots, \gamma^n_\nu(y^n))\nu(d\omega_0, dy^1, \dots, dy^n) \\ & \quad \quad \quad \quad - \int c(\omega_0, \gamma^1_\nu(y^1), \dots, \gamma^n_\nu(y^n))\kappa\mu(d\omega_0, dy^1, \dots, dy^n)) \\
    &\leq \inf_{\kappa}\|\nu - \kappa\mu\|_{TV} < \epsilon.
\end{align*}
The first inequality follows because the second term can not be made larger through an additional filtration of the information through $\kappa$, since the identity garbling could be selected. The second inequality follows from the definition of total variation distance. 

\textbf{Step 2.} For the converse direction, we assume that for any team policy $\bar{\gamma}_\nu$ under $\nu$ and any game with a bounded cost function $\|c\|_{\infty} \leq 1$, there exists a team policy $\bar{\gamma}_{\mu}$ under $\mu$ such that
\begin{equation}
     J(G, \mu, \bar{\gamma}_{\mu}) < J(G, \nu, \bar{\gamma}_\nu) + \epsilon. \nonumber
\end{equation}
In particular, this is true for every game in which $\mathbb{U}^i = \mathbb{Y}^i$ for all players; we denote this by the subclass $\mathcal{G}_{\mathbb{Y} = \mathbb{U}}$. Since the inequality is true for any arbitrary policy $\bar{\gamma}_\nu$, in particular it is true for the team identity policy $\gamma^{id}_i(y^i) = y^i$, for $i = 1,\dots,n$. Thus, we have that for any game in this class, there exists a policy $\bar{\gamma}_{\mu}$ such that:
\begin{align*}
    J(G, \mu, \bar{\gamma}_{\mu}) - J(G, \nu, \gamma^{id}) < \epsilon.
\end{align*}
This implies that the following equation holds:
\begin{equation}
    \sup_{G \in \mathcal{G}_{Y=U}}(J^*(G, \mu) - J(G, \nu, \gamma^{id})) < \epsilon. \nonumber
\end{equation}
We can then complete the proof by noting the following:
\begin{align*}
& \sup_{G \in \mathcal{G}_{Y=U}}(J^*(G, \mu) - J(G, \nu, \gamma^{id})) \\
&\leq \sup_{G \in \mathcal{G}_{Y=U}}|J^*(G, \mu) - J(G, \nu, \gamma^{id})| \\
    &= \sup_{\|c\|\leq 1}|\inf_{\bar{\gamma}_{\mu}}\int c(\omega_0, \gamma^1_{\mu}(y^1), \dots, \gamma^n_{\mu}(y^n))\mu(d\omega_0, dy^1, \dots, dy^n) \\ & \quad \quad \quad \quad \quad \quad \quad \quad - \int c(\omega_0, y^1, \dots, y^n)\nu(d\omega_0, dy^1, \dots, dy^n)| \\
    & = \sup_{\|c\|\leq 1}|\inf_{\kappa}\int c(\omega_0, y^1, \dots, y^n)\kappa\mu(d\omega_0, dy^1, \dots, dy^n) \\ & \quad \quad \quad \quad \quad \quad \quad \quad - \int c(\omega_0, y^1, \dots, y^n)\nu(d\omega_0, dy^1, \dots, dy^n)| \\
    &\leq \inf_{\kappa} \sup_{\|c\|\leq 1}|\int c(\omega_0, y^1, \dots, y^n)\kappa\mu(d\omega_0, dy^1, \dots, dy^n) \\ & \quad \quad \quad \quad \quad \quad \quad \quad - \int c(\omega_0, y^1, \dots, y^n)\nu(d\omega_0, dy^1, \dots, dy^n)| \\
    &=  \inf_{\kappa}\|\kappa \mu - \nu\|_{TV}
\end{align*}
Since the first term is bounded above by $\epsilon$, we have that $\inf_{\kappa}\|\kappa \mu - \nu\|_{TV} < \epsilon$, completing the proof.
\end{proof}

\subsection{Comparison of Information Structures in Zero-Sum Games involving Teams against Teams}

Now, we will prove an extension of Blackwell's comparison of information structures similar to that in \cite{pkeski2008comparison} and \cite{InformationHY}. 

Let $\tilde{\mathcal{G}}$ denote the set of all team-against-team zero-sum games with for which the cost function is continuous and bounded in players' actions for every state. %\sy{We may again impose absolute continuity (\ref{absContCont1}), as a sufficient condition for $L_C(\mu)$ to be compact.}

\begin{theorem}\label{BlackwellTaT}
Take fixed $\Omega_0, P$, fixed and compact measurement spaces, and information structures $\nu$ and $\mu$ for which Assumption \ref{ABSContA3} holds. Then $\mu$ is more informative for the maximizing team than $\nu$ over all games in $\tilde{\mathcal{G}}$ if and only if the following hold:

\begin{itemize}
    \item[(i).] The measurement of each player in Team 1 under $\nu$ is given by $\tilde{y}^i_1({\nu}) = h^i(y^i_1(\mu), Z_1)$ for $i = 1, \dots, n$ and some independent randomness $Z_1$ (with possibly dependent coordinate components). Denote this information structure by $\kappa_1\mu$.
    \item[(ii).] The measurement of each player in Team 2 under $\mu$ is given by $\tilde{y}^i_2({\mu}) = h^i(y^i_2(\nu), Z_2)$ for $i = 1, \dots, n$ and some independent randomness $Z_2$ (with possibly dependent coordinate components). Denote this information structure by $\kappa_2\nu$. 
\end{itemize}
That is to say, if and only if $\kappa_1\mu = \kappa_2\nu$ for some $\kappa_1$ and $\kappa_2$.

\end{theorem}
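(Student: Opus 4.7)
The plan is to structure the proof around an intermediate ``bridging'' information structure $\rho := \kappa_1 \mu = \kappa_2 \nu$, which under conditions (i)--(ii) sits between $\mu$ and $\nu$ in the sense that it is a correlated garbling (in the sense of Definition~\ref{commonGarbleDefinition}) of Team~1's measurements in $\mu$ with Team~2's joint marginal unchanged, and simultaneously a correlated garbling of Team~2's measurements in $\nu$ with Team~1's joint marginal unchanged. This two-sided pivot lets sufficiency reduce to two independent applications of the single-team comparison Theorem~\ref{Blackwellteam}, and provides a natural target for a Hahn--Banach separation argument in the converse that mirrors Part~2 of the proof of Theorem~\ref{Blackwellteam}.

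For the forward direction, I would fix any game $G \in \tilde{\mathcal{G}}$. The transition $\mu \to \rho$ preserves Team~2's joint marginal on $(\Omega_0, \prod_k \mathbb{Y}^k_2)$ and only garbles Team~1's measurements. Hence for every fixed Team~2 strategic measure $s_2 \in L_C(\mu_2) = L_C(\rho_2)$, Team~1 faces a single-team decision problem whose extended state is $(x, y^1_2, \dots, y^{N_2}_2, u^1_2, \dots, u^{N_2}_2)$ and whose cost is bounded and continuous in Team~1's actions; by the easy direction of Theorem~\ref{Blackwellteam} applied to this subproblem, Team~1's best-response value is weakly ordered across $\mu$ and $\rho$. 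Taking the supremum over $s_2$ and invoking saddle-point existence (Theorem~\ref{SPExistTvsT}, whose hypotheses follow from Assumption~\ref{ABSContA3}) orders $V^*(\mu)$ and $V^*(\rho)$; the symmetric application to the transition $\rho \to \nu$ orders $V^*(\rho)$ and $V^*(\nu)$, and chaining delivers the claimed comparison.

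For the converse, I would argue by contradiction. Define
\begin{align*}
\mathcal{K}_1 &:= \{\kappa_1 \mu : \kappa_1 \text{ is a correlated garbling of Team~1's measurements as in Definition~\ref{commonGarbleDefinition}}\}, \\
\mathcal{K}_2 &:= \{\kappa_2 \nu : \kappa_2 \text{ is a correlated garbling of Team~2's measurements}\},
\end{align*}
as subsets of $\mathcal{P}(\Omega_0 \times \prod_{i,k} \mathbb{Y}^k_i)$ equipped with the locally convex topology used in the proof of Theorem~\ref{Blackwellteam}. Since correlated garblings correspond bijectively to elements of the relevant $L_C$-sets when the action spaces are taken to equal the measurement spaces, Assumption~\ref{ABSContA3} and Theorem~\ref{LCcompact} applied within each team yield that $\mathcal{K}_1$ and $\mathcal{K}_2$ are convex and closed; the fixed prior marginal $\zeta$ provides tightness and promotes closedness to compactness. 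Assuming $\mathcal{K}_1 \cap \mathcal{K}_2 = \emptyset$, Hahn--Banach produces a continuous linear functional $\langle \cdot, f\rangle$ with $f$ bounded measurable and continuous in the measurement coordinates, strictly separating $\mathcal{K}_1$ from $\mathcal{K}_2$. Setting $\mathbb{U}^i_j = \mathbb{Y}^i_j$ and viewing $f$ as the cost function of a game in $\tilde{\mathcal{G}}$, the separating inequality identifies $\inf_{\kappa_1}\int f\, d(\kappa_1 \mu)$ with Team~1's best response under $\mu$ against Team~2 playing the identity, and $\sup_{\kappa_2}\int f\, d(\kappa_2 \nu)$ with Team~2's best response under $\nu$ against Team~1 playing the identity; combined with the saddle-point sandwiches $V^*(\mu) \geq \inf_{\bar\gamma_1} V(\mu, \bar\gamma_1, \mathrm{id})$ and $V^*(\nu) \leq \sup_{\bar\gamma_2} V(\nu, \mathrm{id}, \bar\gamma_2)$, this yields a strict value gap inconsistent with the hypothesis.

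The main obstacle is the converse. The two delicate points are (i) verifying that $\mathcal{K}_1$ and $\mathcal{K}_2$ are simultaneously convex, closed, and compact in a topology fine enough for Hahn--Banach separation yet coarse enough that every separating functional is representable by a bounded measurable integrand continuous in the measurement coordinates, so that the $f$ obtained is a legitimate cost function in $\tilde{\mathcal{G}}$; and (ii) translating the separation inequality, which is phrased entirely in terms of integrals against garblings with the opposing team frozen at the identity, into a genuine team-against-team value comparison. The latter is where Theorem~\ref{SPExistTvsT} and the identification $\mathbb{U}^i_j = \mathbb{Y}^i_j$ are essential: every admissible correlated-randomized team policy is realized as a garbling of the identity, so that best responses against the identity opponent capture best responses over the full policy space, and the strategic-measure-level separation can be lifted to the equilibrium-value level.
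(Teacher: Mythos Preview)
Your forward direction is the same as the paper's: both pivot through $\rho=\kappa_1\mu=\kappa_2\nu$ and apply the easy half of Theorem~\ref{Blackwellteam} once per team to obtain the chain $V^*(G,\mu)\le V^*(G,\rho)\le V^*(G,\nu)$.

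Your converse, however, takes a genuinely different route. The paper does \emph{not} separate $\mathcal{K}_1$ from $\mathcal{K}_2$ directly in the joint measure space. Instead it freezes Team~2 at an equilibrium policy, absorbs that policy into the cost, argues that as $G$ and the frozen policy range one spans all single-team problems for Team~1, and then invokes the already-proved converse of Theorem~\ref{Blackwellteam} to obtain condition~(i); a symmetric pass gives~(ii). Your argument instead runs Hahn--Banach once, in $\mathcal{P}(\Omega_0\times\prod_{i,k}\mathbb{Y}^k_i)$, on the two garbling orbits $\mathcal{K}_1,\mathcal{K}_2$ themselves, and reads the separating functional as a cost witnessing $V^*(f,\mu)>V^*(f,\nu)$ via the sandwich $V^*(\mu)\ge\inf_{\bar\gamma_1}V(\mu,\bar\gamma_1,\mathrm{id})$ and $V^*(\nu)\le\sup_{\bar\gamma_2}V(\nu,\mathrm{id},\bar\gamma_2)$. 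This is correct (after choosing the sign of $f$ so that $\mathcal{K}_1$ lies above $\mathcal{K}_2$) and is in fact closer in spirit to P\k{e}ski's original two-player argument. What you gain is that the \emph{joint} condition $\mathcal{K}_1\cap\mathcal{K}_2\neq\emptyset$ is targeted head-on, whereas the paper's modular reduction to Theorem~\ref{Blackwellteam} yields conditions~(i) and~(ii) separately and leaves the step of gluing them into a single common $\rho$ to the cited references. What the paper's approach buys is economy: it reuses Theorem~\ref{Blackwellteam} as a black box rather than rerunning the separation machinery and the compactness analysis of $\mathcal{K}_1,\mathcal{K}_2$ from scratch.
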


\begin{proof}
\textit{(Sketch)}
The proof follows naturally from the argument in \cite{InformationHY}, where instead of showing the results by holding one player constant and using Blackwell's comparison theorem for single-player decision problems, we can hold one team constant and use Theorem \ref{Blackwellteam}. For this reason, only a sketch of the proof is presented here.

\textbf{Step 1:} The forward direction follows from the forward direction to Theorem \ref{Blackwellteam}. Under an information structure, if team $i$'s information is held constant, and team $-i$'s information is garbled with possibly common randomness, then team $-i$ can not perform better under this garbling than they did under the original information structure. This follows Theorem \ref{Blackwellteam} by absorbing team $i$, who have constant information across both information structures, into the cost function for team $-i$ and treating this as a team problem. Applying this for $i = 1,2$ yields the following relationship for any game $G$ in $\tilde{G}:$
\begin{equation}
  V^*(G,\mu) \leq V^*(G, \kappa_1\mu) = V^*(G, \kappa_2\nu) \leq V^*(G, \nu). \nonumber
\end{equation}
Thus, the equilibrium value is lower under $\mu$ than under $\nu$ for every game, and so $\mu$ is more informative than $\nu$. 

\textbf{Step 2:}
For the converse direction, we apply the converse direction to Theorem $\ref{Blackwellteam}$. We can observe that if $V^*(G, \mu) \leq V^*(G, \nu)$ for all $G \in \tilde{\mathcal{G}}$, then it follows that:
\[V(G, \mu, \bar{\gamma}^1_\mu, \bar{\gamma}^2_\nu) \leq V^*(G, \mu)  \leq V^*(G, \nu) ,\]
where $(\bar{\gamma}^1_\mu, \bar{\gamma}^2_\mu)$ and $(\bar{\gamma}^1_\nu, \bar{\gamma}^2_\nu)$ are the team equilibrium policies under $\mu$ and $\nu$, respectively. This equation holds by perturbing the maximizing team's policy to no longer be its optimal policy under $\mu$ in the first term, decreasing the value of the game. 

Then, by comparing the first and third terms above and fixing Team 2's policy as constant for each game $G$, we can absorb Team 2's effect on Team 1 into the cost function for any game in $\tilde{\mathcal{G}}$. Projecting onto the space of team problems for Team 1 this way, we can span all possible team problems, achieving the inequality for Team 1:
\[V^*(E, \mu) \leq V^*(E, \nu),\]
for all team problems $E \in \mathcal{E}$. 

Thus, we get that $\mu$ is more informative than $\nu$ for Team 1. Applying the converse direction to Theorem \ref{Blackwellteam}, we get that condition $(i)$ of the theorem statement holds. Applying a similar analysis by swapping the teams leads to condition $(ii)$. 
\end{proof}

This theorem gives a general comparison of information structures result for both teams and zero-sum games. When only cost functions where one team's actions are irrelevant to the cost are considered, the result reduces to that of team problems, which were discussed in Theorem \ref{Blackwellteam} and in \cite{LehrerRosenbergShmaya}. When each team is restricted to having only one player, the result reduces to a standard zero-sum games, a case that was explored in \cite{pkeski2008comparison} and \cite{InformationHY}. When both of these restrictions are applied, the result reduces to the original single-player decision problem studied by Blackwell. 

%\section{Two Additional Results: Continuity in Information Structures and Optimality of Symmetric Policies in Zero-Sum Teams-against-Teams Games}

\section{Continuity of Equilibrium Values in Information in Team-against-Team Zero-Sum Games}

Recently in \cite{hogeboom2021continuity}, it was shown that for both zero-sum games as well as for stochastic team problems, equilibrium values are continuous when the information structures are perturbed under the total variation topology. Therefore, it is not surprising that we are able to obtain the following regularity results for zero-sum games involving two teams of competing players.

As noted in Section \ref{InfoStTTG1}, the teams share a common cost function $c: \Omega_0 \times \mathbb{U}^1_1 \times \dots \times \mathbb{U}^{N_1}_1 \times \mathbb{U}^1_2 \times \dots \times \mathbb{U}^{N_2}_2 \rightarrow \mathbb{R}$. Team 1's goal is to minimize the expected cost and Team 2's goal is to maximize it. The information structure for the game $\mu$ is a joint probability measure on $\Omega_0 \times \mathbb{Y}^1_1 \times \dots \times \mathbb{Y}^{N_1}_1 \times \mathbb{Y}^1_2 \times \dots \times \mathbb{Y}^{N_2}_2$. We use $\mu_i$ to denote the marginal of $\mu$ on $\Omega_0 \times \mathbb{Y}^i_i \times \dots \times  \mathbb{Y}^{N_i}_i$ for $i = 1,2$ (i.e. $\mu_i$ captures the information available to team $i$).

\begin{theorem}\label{teamateamtheoremCont}
Suppose that the hypotheses of Theorem \ref{SPExistTvsT} apply so that equilibria exist. Then the value function $J^*(c, \mu)$ is continuous under total variation convergence of information structures provided that perturbations in information satisfy the conditions of existence. 
\end{theorem}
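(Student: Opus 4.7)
The plan is to prove the stronger quantitative estimate
\begin{equation*}
|V^*(\mu) - V^*(\nu)| \leq \|c\|_\infty\, \|\mu - \nu\|_{TV}
\end{equation*}
for any two information structures satisfying the hypotheses of Theorem \ref{SPExistTvsT}, and then specialize to a convergent sequence to obtain the continuity claim. This splits the argument into two essentially independent pieces: a uniform Lipschitz bound on the expected cost for fixed team policies, and the standard saddle-point comparison familiar from scalar zero-sum game theory.

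First I would fix an arbitrary pair of team policies $(\bar{\gamma}_1, \bar{\gamma}_2)$ whose induced strategic measures lie in $L_C(\mu_1) \times L_C(\mu_2)$, as in Theorem \ref{SPExistTvsT}. Each $\bar{\gamma}_i$ is realized with Team-$i$ common randomness $z_i \in [0,1]^{N_i}$ drawn from a fixed distribution $\eta_i$, with $z_1$ and $z_2$ independent of each other and of the exogenous variables. Because the measurement spaces are fixed and the policies are ordinary measurable maps on them, the same $(\bar{\gamma}_1, \bar{\gamma}_2)$ is a valid admissible pair under any information structure on $\Omega_0 \times \prod_{k,i}\mathbb{Y}^k_i$. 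Writing
\begin{equation*}
V(\mu, \bar{\gamma}_1, \bar{\gamma}_2) = \int c\bigl(\omega_0, \bar{\gamma}_1(\bar{y}_1, z_1), \bar{\gamma}_2(\bar{y}_2, z_2)\bigr)\, \eta_1(dz_1)\,\eta_2(dz_2)\, \mu(d\omega_0, d\bar{y}_1, d\bar{y}_2),
\end{equation*}
and performing the $z_i$-integrations first via Fubini, the remaining integrand against $\mu$ is a bounded measurable function of $(\omega_0, \bar{y}_1, \bar{y}_2)$ with supremum norm at most $\|c\|_\infty$. Since only $\mu$ changes between the two games (the randomization kernels being identical), this yields
\begin{equation*}
|V(\mu, \bar{\gamma}_1, \bar{\gamma}_2) - V(\nu, \bar{\gamma}_1, \bar{\gamma}_2)| \leq \|c\|_\infty\, \|\mu - \nu\|_{TV},
\end{equation*}
and the bound is uniform over all admissible $(\bar{\gamma}_1, \bar{\gamma}_2)$.

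Next I would invoke the elementary inequality $|\inf_x \sup_y f(x,y) - \inf_x \sup_y g(x,y)| \leq \sup_{x,y}|f(x,y) - g(x,y)|$ applied to $f = V(\mu, \cdot, \cdot)$ and $g = V(\nu, \cdot, \cdot)$ on the space of admissible team policies. Theorem \ref{SPExistTvsT} guarantees that for both $\mu$ and $\nu$ the equilibrium exists and coincides with the common value of $\inf\sup$ and $\sup\inf$, so this elementary comparison together with the uniform Lipschitz bound above gives the quantitative estimate displayed at the top. Continuity under total variation convergence, subject to each perturbed information structure itself satisfying the existence hypotheses, is then immediate.

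The main obstacle is not conceptual but one of careful bookkeeping: one must verify that an admissible policy (measurable map plus distribution on common randomness) is simultaneously admissible under both $\mu$ and $\nu$, and that the independence of the teams' common randomness from the information structure preserves a product structure so that the total variation of the joint strategic measure collapses to $\|\mu - \nu\|_{TV}$ and does not pick up spurious contributions from the policy randomization. Once these structural points are in place, Fubini and the saddle-point inequality deliver the result without further technicality; notably, no appeal to the correlated-garbling machinery of Theorem \ref{BlackwellTaT} or to the Le Cam deficiency of Theorem \ref{LeCamTeams} is necessary, though Theorem \ref{LeCamTeams} applied team-by-team provides an alternative route that also yields the same Lipschitz constant.
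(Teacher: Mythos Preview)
Your proposal is correct and follows essentially the same approach as the paper: both arguments rest on the observation that a fixed pair of team policies (with their common randomness absorbed) is admissible under either information structure, so for fixed policies the expected cost differs by at most $\|c\|_\infty\|\mu-\nu\|_{TV}$, and then a saddle-point comparison transfers this to the equilibrium values. The only cosmetic difference is that the paper carries out the comparison explicitly by perturbing one team's equilibrium policy at a time (replacing the minimizer's optimal strategy under $\mu$ by its optimal strategy under $\nu$, and conversely for the maximizer), whereas you invoke the abstract inequality $|\inf_x\sup_y f - \inf_x\sup_y g|\leq \sup_{x,y}|f-g|$; the latter is of course proved precisely by the former perturbation argument, so the two presentations are equivalent.
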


\begin{proof}
Following the proof of \cite[Theorem 3.1]{hogeboom2021continuity} and viewing the common randomness variable as an exogenous integration term, the result follows from identical steps. We note again, as in the proof of Theorem \ref{LeCamTeams}, when selecting a team policy $\bar{\gamma}$ under an information structure, we view the selection of the policy as the selection of both the independent randomness $z \in [0,1]^N$ and the maps $\gamma^i: \mathbb{Y}^i \times [0,1]^N \rightarrow \mathbb{U}^i$ for each player. Since $z$ is independent of the other random variables, we will, with a slight abuse of notation, allow the dependence of $z$ to be absorbed into the $\gamma^i$ terms, and thus denote them as maps from $\mathbb{Y}^i$ to $\mathbb{U}^i$. 

Theorem \ref{SPExistTvsT} shows that an equilibrium exists under $\mu$ and $\nu$. Assume that $\mu$ and $\nu$ are such that $\|\mu - \nu\|_{TV} \leq \epsilon$ for some $\epsilon >0$. Without loss of generality, assume that $J^*(g,\mu) - J^*(g, \nu) \geq 0$ for some game $g$ (a symmetric argument can be applied in the case where $J^*(g,\mu) - J^*(g, \nu) \leq 0$).
Then we have the following:
\begin{align*}
&J^*(g,\mu) - J^*(g, \nu) \\
& = \int c(x, \bar{\gamma}^1_{\mu}, \bar{\gamma}^2_{\mu})\mu(dx; dy^1_1,\cdots,dy^{N_1}_1; dy^2_1,\cdots,dy^{N_2}_2)  \\
&\qquad \qquad \qquad - \int c(x, \bar{\gamma}^1_{\nu}, \bar{\gamma}^2_{\nu})\nu(dx; dy^1_1,\cdots,dy^{N_1}_1; dy^2_1,\cdots,dy^{N_2}_2)  \\
&\leq \int c(x, \bar{\gamma}^1_{\nu}(y^1), \bar{\gamma}^2_{\mu}(y^2))\mu(dx; dy^1_1,\cdots,dy^{N_1}_1; dy^2_1,\cdots,dy^{N_2}_2)  \\
&\qquad \qquad \qquad - \int c(x, \bar{\gamma}^1_{\nu}(y^1), \bar{\gamma}^2_{\mu}(y^2))\nu(dx; dy^1_1,\cdots,dy^{N_1}_1; dy^2_1,\cdots,dy^{N_2}_2).
\end{align*}

The inequality comes from perturbing the minimizer team's equilibrium strategy in the first term, making the expected cost larger, and perturbing the maximizer team's equilibrium strategy in the second term, making the expected cost smaller. Since the functions to be integrated are identical and by considering the symmetric argument for the case with $J^*(g,\mu) - J^*(g, \nu) \leq 0$, we have that $\|\mu_n - \mu\|_{TV} \rightarrow 0$, then $|J^*(g,\mu_n) - J^*(g,\mu)| \rightarrow 0$. 

%Then, viewing ${\gamma}^{1,*}_{\nu}$ and $\gamma^{2,*}_{\mu}$ as fixed, we can view $c(\cdot, {\gamma}^{1,*}_{\nu}(\cdot), \gamma^{2,*}_{\mu}(\cdot))$ as a measurable and bounded function from $\mathbb{X} \times \mathbb{Y}^1 \times \mathbb{Y}^2 \rightarrow \mathbb{R}$. Let $M \in \mathbb{R}_{\geq 0}$ be such that $\|c\|_\infty \leq M$. We have:
%\begin{align*}
%    &\int c(x, {\gamma}^{1,*}_{\nu}(y^1), \gamma^{2,*}_{\mu}(y^2))\mu(dx, dy^1, dy^2) - \int c(x, {\gamma}^{1,*}_{\nu}(y^1), {\gamma}^{2,*}_{\mu}(y^2))\nu(dx, dy^1, dy^2) \\
%    &\leq M\|\mu - \nu\|_{TV} \leq M\epsilon.
%\end{align*}

\end{proof}

% to show continuity in the values of player information structures, and then perturbing the information within each team and following the proof of \cite[Theorem 4.1]{hogeboom2021continuity} (which generalizes \cite[Theorem 3.4]{YukselOptimizationofChannels}); then continuity is established.

%\subsection{Optimality of symmetric policies for large competing teams}

\section{Conclusion}
For games with Standard Borel measurement and action spaces involving two teams of finitely many agents taking part in a zero-sum game, we presented an existence result for saddle-point equilibria when common randomness is assumed to be available in each team with an analysis on conditions for compactness of strategic team measures provided. Then, Blackwell's ordering of information structures was generalized to $n$-player teams with standard Borel spaces, where correlated garbling of information structures is introduced as a key attribute, and building on this result Blackwell's ordering of information structures was defined for team-against-team zero-sum game problems. Finally, continuity of equilibrium value of team-against-team zero-sum game problems in the space of information structures under total variation was established.

\bibliographystyle{plain}
\bibliography{IanBib,SerdarBibliography}

\begin{thebibliography}{10}

\bibitem{anantharam2007common}
V.~Anantharam and V.~S. Borkar.
\newblock Common randomness and distributed control: A counterexample.
\newblock {\em Systems and control letters}, 56(7):568--572, 2007.

\bibitem{tBasarStochasticDiffGames}
T.~Ba\c{s}ar.
\newblock Stochastic differential games and intricacy of information
  structures.
\newblock In Josef Haunschmied, Vladimir~M. Veliov, and Stefan Wrzaczek,
  editors, {\em Dynamic Games in Economics}, volume~16 of {\em Dynamic Modeling
  and Econometrics in Economics and Finance}, pages 23--49. Springer Berlin
  Heidelberg, 2014.

\bibitem{balder2001}
E.~J. Balder.
\newblock On ws-convergence of product measures.
\newblock {\em Mathematics of Operations Research}, 26(3):494--518, 2001.

\bibitem{balder1988generalized}
E.J. Balder.
\newblock Generalized equilibrium results for games with incomplete
  information.
\newblock {\em Mathematics of Operations Research}, 13(2):265--276, 1988.

\bibitem{Bassan2003}
B.~Bassan, O.~Gossner, M.~Scarsini, and S.~Zamir.
\newblock Positive value of information in games.
\newblock {\em International Journal of Game Theory}, 32:17--31, December 2003.

\bibitem{Bil99}
P.~Billingsley.
\newblock {\em Convergence of probability measures}.
\newblock New York: Wiley, 2nd edition, 1999.

\bibitem{Blackwell1951}
D.~Blackwell.
\newblock The comparison of experiments.
\newblock {\em in Proceedings of the Second Berkeley Symposium on Mathematical
  Statistics and Probability}, pages 93--102, 1951.

\bibitem{Blackwell}
D.~Blackwell.
\newblock Equivalent comparison of experiments.
\newblock {\em Annals of Mathematical Statistics}, 24:265--272, 1953.

\bibitem{BorkarRealization}
V.~S. Borkar.
\newblock White-noise representations in stochastic realization theory.
\newblock {\em SIAM J. on Control and Optimization}, 31:1093--1102, 1993.

\bibitem{LeCamSufficiency}
L.~Le Cam.
\newblock Sufficiency and approximate sufficiency.
\newblock {\em Annals of Mathematical Statistics}, 35:1419--1455, 1964.

\bibitem{CHSH1969}
J.~F. Clauser, M.~A. Horne, A.~Shimony, and R.~A. Holt.
\newblock Proposed experiment to test local hidden-variable theories.
\newblock {\em Physical Review Letters}, 23(15):880, 1969.

\bibitem{Lehrer}
E.~Shmaya E.~Lehrer, D.~Rosenberg.
\newblock Signaling and mediation in games with common interests.
\newblock {\em Games and Economic Behaviour}, 68:670--682, 2006.

\bibitem{ethier2009markov}
S.~N. Ethier and T.~G. Kurtz.
\newblock {\em Markov processes: characterization and convergence}, volume 282.
\newblock John Wiley \& Sons, 2009.

\bibitem{Fan}
K.~Fan.
\newblock Minimax theorems.
\newblock {\em Proceedings of the National Academy of Sciences of the United
  States of America}, 39:42--47, 1953.

\bibitem{gossner2001value}
O.~Gossner and JF. Mertens.
\newblock The value of information in zero-sum games.
\newblock {\em preprint}, 2001.

\bibitem{gupta2014existence}
A.~Gupta, S.~Y\"uksel, T.~Ba\c{s}ar, and C.~Langbort.
\newblock On the existence of optimal policies for a class of static and
  sequential dynamic teams.
\newblock {\em SIAM Journal on Control and Optimization}, 53:1681--1712, 2015.

\bibitem{InformationHY}
I.~Hogeboom-Burr and S.~Y{\"u}ksel.
\newblock Comparison of information structures for zero-sum games and a partial
  converse to blackwell ordering in standard borel spaces.
\newblock {\em arXiv:2005.06673}.

\bibitem{hogeboom2021continuity}
I.~Hogeboom-Burr and S.~Y\"uksel.
\newblock Continuity properties of value functions in information structures
  for zero-sum and general games and stochastic teams.
\newblock {\em arXiv preprint arXiv:2109.11035}, 2021.

\bibitem{LehrerRosenbergShmaya}
E.~Lehrer, D.~Rosenberg, and E.~Shmaya.
\newblock Signaling and mediation in games with common interests.
\newblock {\em Games and Economic Behavior}, 68:670--682, 2010.

\bibitem{mamer1986zero}
J.~W. Mamer and K.~E. Schilling.
\newblock A zero-sum game with incomplete information and compact action
  spaces.
\newblock {\em Mathematics of Operations Research}, 11(4):627--631, 1986.

\bibitem{mertens2015repeated}
J.-F. Mertens, S.~Sorin, and S.~Zamir.
\newblock {\em Repeated games}, volume~55.
\newblock Cambridge University Press, 2015.

\bibitem{pkeski2008comparison}
M.~P{\k{e}}ski.
\newblock Comparison of information structures in zero-sum games.
\newblock {\em Games and Economic Behavior}, 62(2):732--735, 2008.

\bibitem{pkeski2019value}
M.~P{\k{e}}ski, F.~Gensbittel, and J.~Renault.
\newblock Value-based distance between the information structures.
\newblock {\em arXiv preprint arXiv:1908.01008}, 2019.

\bibitem{rudin1991functional}
W.~Rudin.
\newblock Functional analysis, mcgraw-hill.
\newblock {\em New York}, 1991.

\bibitem{saldiyukselGeoInfoStructure}
N.~Saldi and S.~Y\"uksel.
\newblock Geometry of information structures, strategic measures and associated
  control topologie.
\newblock {\em arXiv; to appear in Probability Surveys}, pages
  arXiv--2010.07377, 2020.

\bibitem{sanjari2021gameoptimality}
S.~Sanjari, T.~Ba{\c{s}}ar, and S.~Y{\"u}ksel.
\newblock Isomorphism properties of optimality and equilibrium solutions under
  equivalent information structure transformations ii: Stochastic dynamic
  games.
\newblock {\em arXiv preprint arXiv:2104.05787}, 2021.

\bibitem{schal1975dynamic}
M.~Sch{\"a}l.
\newblock On dynamic programming: compactness of the space of policies.
\newblock {\em Stochastic Processes and their Applications}, 3(4):345--364,
  1975.

\bibitem{witsenhausen1971relations}
H.~S. Witsenhausen.
\newblock On the relations between the values of a game and its information
  structure.
\newblock {\em Information and Control}, 19(3):204--215, 1971.

\bibitem{YukselWitsenStandardArXiv}
S.~Y\"uksel.
\newblock A universal dynamic program and refined existence results for
  decentralized stochastic control.
\newblock {\em SIAM Journal on Control and Optimization}, 58(5):2711--2739,
  2020.

\bibitem{YukselBasarBook}
S.~Y\"uksel and T.~Ba\c{s}ar.
\newblock {\em Stochastic Networked Control Systems: Stabilization and
  Optimization under Information Constraints}.
\newblock Springer, New York, 2013.

\bibitem{YukselOptimizationofChannels}
S.~Y\"uksel and T.~Linder.
\newblock Optimization and convergence of observation channels in stochastic
  control.
\newblock {\em SIAM J. on Control and Optimization}, 50:864--887, 2012.

\bibitem{YukselSaldiSICON17}
S.~Y\"uksel and N.~Saldi.
\newblock Convex analysis in decentralized stochastic control, strategic
  measures and optimal solutions.
\newblock {\em SIAM J. on Control and Optimization}, 55:1--28, 2017.

\end{thebibliography}

\end{document}